\newcommand{\Perf}{\operatorname{Perf}}
\title{Cancellativization of dimer models}
\author{Charlie Beil, Akira Ishii, Kazushi Ueda}
\date{}
\begin{document}

\maketitle

\begin{abstract}
We show that any dimer model can be made cancellative
without changing the characteristic polygon.
\end{abstract}


\section{Introduction}
 \label{sc:introduction}

A {\em dimer model} is a bicolored graph on a real 2-torus $T$
giving a polygon division of $T$.
It is originally introduced in 1930s
as a model in statistical mechanics
\cite{Fowler-Rushbrooke},
and has been actively studied since then.
See e.g. a review by Kenyon
\cite{Kenyon_IDM} and references therein
for dimer models as statistical mechanical models.

More recently,
a new connection between dimer models and quivers
has been discovered by string theorists
(cf.~e.g.~\cite{Kennaway_BT}).
A dimer model encodes the information
of a quiver $\Gamma$ with relations,
and the resulting path algebra $\bC \Gamma$
is a {\em Calabi-Yau algebra} of dimension three
in the sense of Ginzburg
\cite{Ginzburg_CYA}
if and only if $\bC \Gamma$ is {\em cancellative}
(i.e., $a b = a c \ne 0$ for an arrow $a$
and a pair $(b, c)$ of paths implies $b = c$,
and similarly for $b a = c a$)
\cite{Broomhead,Mozgovoy-Reineke,Davison}.
One can also give a purely combinatorial condition
on a dimer model,
called the {\em consistency condition},
which is equivalent to the cancellation property
of the path algebra
if the dimer model is {\em non-degenerate}
\cite{Ishii-Ueda_CCDM,Bocklandt_CCDM}.
We say that a dimer model is {\em cancellative}
if it satisfies one (and hence all)
of these equivalent conditions.

With a dimer model,
one can associate two convex lattice polygons
called the {\em characteristic polygon}
and the {\em zigzag polygon}.
Here, a {\em lattice polygon} is the convex hull
of a finite lattice points on $\bR^2$.
Although these two polygons are different in general,
they coincide
if the dimer model is cancellative
\cite{Gulotta,Ishii-Ueda_DMSMCv1}.
We say that a polygon is {\em non-degenerate}
if it has an interior point.

It is easy to make a dimer model cancellative
without changing the zigzag polygon:

\begin{theorem} \label{th:same_zigzag}
If the zigzag polygon of a dimer model $G$ is non-degenerate,
then one can remove some edges and nodes from $G$
to obtain another dimer model $G'$,
which is cancellative with the same zigzag polygon as $G$.
\end{theorem}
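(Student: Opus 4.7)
The plan is to iteratively strip $G$ of edges (and of the consequently univalent nodes) that witness failure of cancellativity, while keeping intact a small family of zigzag paths realizing the vertices of the zigzag polygon. I would work throughout with the combinatorial characterization of cancellativity from \cite{Ishii-Ueda_CCDM,Bocklandt_CCDM}: on the universal cover $\widetilde{G}$, cancellativity of $G$ is equivalent to the absence of any \emph{bad} zigzag configuration, i.e.\ a zigzag path self-intersecting in the same direction or two zigzag paths with parallel homology classes crossing one another.

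First I would fix, once and for all, a collection of zigzag paths $Z_1,\dots,Z_k$ of $G$ whose homology classes exhaust the vertices of the zigzag polygon $P$. Non-degeneracy of $P$ forces $k\geq 3$ and guarantees that these classes are not collinear, which is what will give room to protect the $Z_i$'s during reduction.

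Second, as long as $G$ admits a bad zigzag configuration, I would exhibit an edge $e$ of $G$ lying on one of the offending zigzag paths but outside $Z_1\cup\cdots\cup Z_k$, whose removal destroys at least one of the offenders. After deleting $e$ I would recursively delete any univalent node together with its remaining edge, until the result $G'$ is again a bicolored polygon division of the torus. Because the $Z_i$ are left untouched, every vertex of $P$ is still realized by a zigzag homology class in $G'$; conversely, I would argue that each zigzag path of $G'$ is obtained by locally rerouting a zigzag path of $G$ across the merged face, so that its homology class lies in the convex hull of those of $G$ and no new vertex of $P$ appears.

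Finally, the edge count strictly decreases at every step, so the procedure terminates; at termination the resulting dimer model has no bad zigzag configuration and is therefore cancellative, with the same zigzag polygon as $G$. The main obstacle is the second step: the simultaneous requirements that the chosen edge $e$ (i) resolves a bad configuration, (ii) leaves the reduced graph a bona fide dimer model after clean-up, and (iii) avoids the protected paths $Z_1,\dots,Z_k$. I expect (i) and (ii) to follow from a local analysis near the region bounded by two consecutive bad crossings, while (iii) is exactly where the non-degeneracy hypothesis on $P$ enters, providing enough directional freedom so that the finitely many paths $Z_1,\dots,Z_k$ never obstruct the deletion.
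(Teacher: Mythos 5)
Your plan has a genuine gap at the core of step two. You propose to fix a finite collection $Z_1,\dots,Z_k$ of zigzag paths whose slopes realize all the vertices of the zigzag polygon, then only ever delete edges that avoid $Z_1\cup\cdots\cup Z_k$. But the offending configuration may consist entirely of edges on a protected path: a single $Z_i$ can self-intersect on the universal cover, and two lifts of the \emph{same} $Z_i$ can cross one another twice in the same direction. In such cases there is no admissible edge $e$ satisfying your constraint (iii), and the procedure stalls. Non-degeneracy of the zigzag polygon does not rescue this, because the obstruction is not a lack of directional room but the literal coincidence of the protected path with the offender. The paper circumvents this by abandoning the idea of protecting specific paths: it shows instead that deleting the pair of edges at two consecutive same-direction crossings reconnects the two zigzag paths so that the \emph{multiset of slopes} is unchanged, so that after the operation every vertex of the zigzag polygon is still realized. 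That invariance is the missing lemma in your outline, and it is what lets one delete edges on any offending path, protected or not; non-degeneracy of the polygon is then invoked for a different purpose, namely to guarantee that the reduced graph remains a dimer model (no univalent nodes, simply-connected faces).

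A secondary issue is that your enumeration of ``bad configurations'' is incomplete and slightly mis-stated. Consistency (Definition \ref{df:consistency}) requires three things: no homologically trivial zigzag path, no self-intersecting zigzag path on the universal cover, and no pair of zigzag paths crossing in the same direction more than once on the universal cover. Your phrase ``two zigzag paths with parallel homology classes crossing one another'' does not capture the third condition (zigzag paths with non-parallel slopes always cross; the issue is crossing twice in the same direction), and you omit the homologically trivial case entirely. That case matters because, when a homologically trivial zigzag path bounds an empty disk, edge deletion alone does not fix it: the paper must additionally delete divalent nodes and \emph{contract} the remaining nodes, an operation not present in your scheme. So even setting aside the protected-paths problem, the procedure you describe would terminate on some non-cancellative graphs.
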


As a corollary, one obtains the following:

\begin{corollary} \label{cr:zigzag<char}
For any dimer model,
the zigzag polygon is contained in the characteristic polygon.
\end{corollary}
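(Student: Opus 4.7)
The strategy is to reduce to the cancellative case, where the zigzag and characteristic polygons are already known to coincide (\cite{Gulotta,Ishii-Ueda_DMSMCv1}), and to control how the characteristic polygon behaves under the edge/node removal used in Theorem~\ref{th:same_zigzag}.

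The first step would be a \emph{monotonicity} lemma: if $G'$ is obtained from $G$ by removing some edges and nodes while remaining a dimer model, then (up to translation) the characteristic polygon of $G'$ is contained in that of $G$. This should be essentially immediate from the definition of the characteristic polygon as the convex hull of the bi-degrees of perfect matchings in the two generators of $H_1(T;\bZ)$: each perfect matching of $G'$ extends, by recombining with a fixed reference matching on the removed part, to a perfect matching of $G$ with the same relative winding, so the vertices of the $G'$-polygon occur among those of the $G$-polygon.

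With this in hand the corollary follows in the non-degenerate case. Assuming the zigzag polygon of $G$ has an interior point, Theorem~\ref{th:same_zigzag} produces a cancellative dimer model $G'$ with the same zigzag polygon, and cancellativity gives $\text{zigzag}(G') = \text{char}(G')$. Chaining,
\[
  \text{zigzag}(G) \;=\; \text{zigzag}(G') \;=\; \text{char}(G') \;\subseteq\; \text{char}(G),
\]
which is the desired containment. If the zigzag polygon of $G$ is degenerate (a point or a segment) one must argue separately: for instance, by a perturbation trick — adjoining or merging with an auxiliary dimer model whose zigzag polygon is non-degenerate and whose characteristic polygon is controlled — or by noting directly that in the degenerate regime the zigzag polygon is generated by windings of zigzag paths that are visibly realized inside the characteristic polygon.

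The main obstacle is the monotonicity lemma, specifically checking that the surgery in Theorem~\ref{th:same_zigzag} really does permit every perfect matching of $G'$ to be lifted to a perfect matching of $G$, so that heights are comparable after a uniform translation (the characteristic polygon being only defined up to translation). A secondary obstacle is the degenerate case, where Theorem~\ref{th:same_zigzag} does not apply as stated and a supplementary argument is needed.
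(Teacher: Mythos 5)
Your proposal matches the paper's own argument: both apply Theorem~\ref{th:same_zigzag} to get a cancellative $G'$ with the same zigzag polygon, invoke the coincidence of zigzag and characteristic polygons for cancellative models (Theorem~\ref{th:gulotta}), and observe that the edge/node removals of the cancellativization can only shrink the characteristic polygon. The paper leaves the monotonicity step and the degenerate-zigzag-polygon case implicit, so your explicit remarks on those points are just a careful expansion of the same proof.
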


It is more difficult
to make a dimer model consistent
without changing the characteristic polygon.
The main result in this paper states that
this is always possible:

\begin{theorem} \label{th:main}
If the characteristic polygon of a dimer model $G$ is non-degenerate,
then one can remove some edges from $G$
to obtain a cancellative dimer model $G'$
with the same characteristic polygon as $G$.
\end{theorem}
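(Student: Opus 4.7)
The plan is to remove edges from $G$ one at a time in a controlled way, preserving the characteristic polygon throughout, until the result is cancellative. The characteristic polygon $P$ is the convex hull in $H_1(T, \bZ) \cong \bZ^2$ of the degrees of perfect matchings, so I would begin by fixing, for each vertex $v$ of $P$, a \emph{corner perfect matching} $D_v$ of $G$ realizing $v$. Since removing edges can only remove perfect matchings and hence shrink the characteristic polygon, ensuring that each $D_v$ survives every removal guarantees that the characteristic polygon remains $P$.

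The problem then reduces to the following key lemma: if the current sub-dimer model fails to be cancellative, there exists an edge $e$ that (a) is not used by any fixed corner perfect matching $D_v$, and (b) whose removal still gives a polygon division of $T$. Granting this, iterated application terminates in finitely many steps and produces a cancellative sub-dimer model $G'$ with characteristic polygon $P$, proving the theorem.

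To prove the key lemma, I would exploit the structure imposed by a failure of cancellativity: a relation of the form $ab = ac$ with $b \ne c$ corresponds, via the superpotential/F-term relations, to a bounded region in the torus co-bordered by two distinct paths in $G$. Inside such a region, taking symmetric differences of pairs of perfect matchings yields alternating cycles of edges, and by playing these cycles off against the corner matchings one should be able to identify an edge unused by any $D_v$. The non-degeneracy hypothesis on $P$ enters here by supplying perfect matchings realizing interior lattice points of $P$, hence providing enough variety of matchings that the union of the $D_v$ cannot exhaust the edges of any non-cancellative region.

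The main obstacle is precisely this combinatorial step. It is easy to see that some edge in the offending region must differ between competing paths, but one must rigorously rule out the possibility that every such edge is forced by some corner perfect matching. My expectation is that this will require flow or height-function arguments, comparing two corner matchings $D_v$ and $D_{v'}$ along the boundary of a non-cancellative region and using the fact that their difference is supported on zigzag-like flows whose homology classes are the edges of $P$. Once this local analysis is in place, the global iterative construction is essentially automatic.
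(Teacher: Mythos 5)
Your plan — fix one corner perfect matching $D_v$ per vertex of the characteristic polygon $P$, then remove edges not used by any $D_v$ until cancellativity is achieved — fails at the key lemma, and the paper's own running example $G_1$ shows why. In $G_1$ the corners of the characteristic polygon are $(0,0), (-1,0), (-1,-1), (0,-1)$, realized respectively by $D_1$, $D_2$, $D_3$, and any one of $D_4,D_5,D_6$. If you fix $D_4$ as the representative of $(0,-1)$, then $D_1 \cup D_2 \cup D_3 \cup D_4$ already covers every edge of $G_1$, yet $G_1$ is not cancellative (two lifts of a zigzag path intersect twice in the same direction). So there is simply no edge left to remove, and your key lemma is false for a fixed choice of corner matchings. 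The freedom to \emph{re-choose} the matchings representing corner lattice points is essential, and your proposal does not build it in. A secondary inaccuracy: you invoke non-degeneracy of $P$ to supply matchings at interior lattice points, but $G_1$ already shows that $P$ can be non-degenerate (has a topological interior) while having no interior lattice point at all; non-degeneracy is used in the paper only to guarantee that the graph remaining after edge removal is still a bona fide dimer model.

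The paper's route is structurally different and avoids ever arguing cancellativity directly through edge removal. It works with a \emph{pair} of adjacent corners $(\frakc_1, \frakc_2)$ at a time, and proves (Lemma~\ref{lm:no_path}) that the symmetric difference $D_1 \triangle D_2$ of any representatives always contains a connected component $z_i$ that, after possibly replacing $D_1,D_2$ by other matchings with the same height changes (Lemma~\ref{lm:zigzag_white}), can be made zigzag at white nodes. Edges that prevent $z_i$ from being zigzag at black nodes are then shown to be removable without shrinking $P$ (Lemma~\ref{lm:remove_e}). Iterating (Lemma~\ref{lm:zigzag}) turns all components of $D_1 \triangle D_2$ into genuine zigzag paths. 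The punchline is indirect: once the symmetric differences of adjacent corner matchings are unions of zigzag paths, the zigzag polygon must contain the characteristic polygon; since the reverse containment always holds (Corollary~\ref{cr:zigzag<char}), the two polygons coincide. Only at this point does the paper apply the cancellativization of Theorem~\ref{th:same_zigzag}, which preserves the \emph{zigzag} polygon — and for the resulting cancellative model the zigzag polygon equals the characteristic polygon, closing the loop. Your sketch gestures at the right ingredients (flows between corner matchings, zigzag-like structure), but without the adaptive replacement of corner matchings, the pairwise analysis, and the detour through the zigzag polygon identity, the argument does not go through.
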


A dimer model is {\em strongly non-degenerate}
if every edge is contained in at least one corner perfect matching.
Any dimer model can be made strongly non-degenerate
without changing the characteristic polygon,
simply by removing edges not contained in any corner perfect matching.
The proof of Theorem \ref{th:main} also shows the following:

\begin{corollary} \label{cr:multiplicity-free}
If every corner perfect matching
in a strongly non-degenerate dimer model
is multiplicity-free,
then the zigzag polygon coincides with the characteristic polygon.
\end{corollary}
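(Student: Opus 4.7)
The plan is to extract the corollary directly from the construction used in the proof of Theorem~\ref{th:main} by keeping track of which edges are actually removed. The proof of Theorem~\ref{th:main} produces the cancellative dimer model $G'$ from $G$ by deleting a set of ``obstructive'' edges while preserving the characteristic polygon. The corollary should reduce to showing that, under the multiplicity-free hypothesis, this set is empty, so that $G' = G$ is itself cancellative.

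First, I would re-examine the edge-removal step in the proof of Theorem~\ref{th:main} in order to isolate the combinatorial criterion used to flag an edge for deletion. The strong non-degeneracy hypothesis enters here: it guarantees that every edge of $G$ lies in \emph{some} corner perfect matching, so obstructions to cancellativity can always be detected at the level of corner matchings rather than arbitrary perfect matchings. This is the reason the statement restricts to corner perfect matchings in the first place.

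Second, I would verify that the flagging criterion is precisely incompatible with every corner perfect matching being multiplicity-free. Once this dictionary is established, the hypothesis forces the flagged set of edges to be empty, so the construction of Theorem~\ref{th:main} leaves $G$ unchanged and $G$ is already cancellative. Applying the theorem of Gulotta~\cite{Gulotta} and Ishii--Ueda~\cite{Ishii-Ueda_DMSMCv1} recalled in the introduction, the zigzag polygon of any cancellative dimer model coincides with its characteristic polygon, which gives the conclusion.

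The main obstacle is the second step: pinning down the local criterion under which edges get removed in the proof of Theorem~\ref{th:main}, and checking that the presence of such an edge forces some corner perfect matching to fail to be multiplicity-free. This is not a genuinely new argument so much as a careful inspection of the proof of Theorem~\ref{th:main}, tracing the removals back to the combinatorial data of corner perfect matchings.
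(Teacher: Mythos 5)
Your plan has a genuine gap at the point where you conclude that $G$ is ``already cancellative.'' The construction in the proof of Theorem~\ref{th:main} has two distinct edge-removal stages: first Lemma~\ref{lm:zigzag} is applied to make $D_1 \triangle D_2$ consist of zigzag paths for each pair of adjacent corner matchings (producing the intermediate dimer model $G''$), and only then is Theorem~\ref{th:same_zigzag} applied to $G''$ to achieve cancellativity (producing $G'$). The multiplicity-free hypothesis in Corollary~\ref{cr:multiplicity-free} only kills the removals in the \emph{first} stage. Concretely, the proofs of Lemma~\ref{lm:zigzag_white} and Lemma~\ref{lm:remove_e} show that a failure of the zigzag property at a white or black node on the relevant component produces a second perfect matching with the same height change as some corner matching; strong non-degeneracy is what lets you promote the offending edge $e$ to a corner matching that then has multiplicity. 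So under the hypothesis, $D_1 \triangle D_2$ for adjacent corners already consists of zigzag paths with no edge removal. But nothing in this argument prevents those zigzag paths from self-intersecting on the universal cover or intersecting each other badly, so the second stage (Theorem~\ref{th:same_zigzag}) can still be nontrivial, and $G$ need not be cancellative. Consequently your appeal to Gulotta's theorem (Theorem~\ref{th:gulotta}) does not apply.

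The fix is to bypass cancellativity entirely, as the paper does: once you know the symmetric differences for adjacent corner matchings are zigzag paths, their slopes (rotated by $90^\circ$) trace out the boundary of the characteristic polygon, so the zigzag polygon contains the characteristic polygon. The reverse inclusion is Corollary~\ref{cr:zigzag<char}, which holds for every dimer model. Combining the two gives equality of the polygons, which is all the corollary asserts --- it does \emph{not} assert that $G$ is cancellative, and you should not try to prove that.
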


Although cancellativity is a strong condition
and there are many examples of non-cancellative dimer models
(see e.g. \cite{Davey-Hanany-Pasukonis};
in fact, we suspect that almost all dimer models
(in some random graph theoretic sense) are non-cancellative),
Theorems \ref{th:same_zigzag} and \ref{th:main}
shows the abundance of cancellative dimer models
among all dimer models.

This paper is organized as follows:
In Section \ref{sc:dimer},
we recall basic definitions on dimer models.
In Section \ref{sc:zigzag},
we recall the definition of a zigzag polygon
and prove Theorem \ref{th:same_zigzag}.
In Section \ref{sc:cancellativization},
we discuss an operation of cancellativization
which keeps the characteristic polygon fixed.
We first remove suitable edges
from the dimer model $G$
to obtain another dimer model $G'$
satisfying the following conditions:
\begin{itemize}
 \item
The characteristic polygon of $G'$ coincides
with that of $G$.
 \item
For any pair $(\frakc_1, \frakc_2)$ of adjacent corners
of the characteristic polygon of $G'$,
there are perfect matchings $D_1$ and $D_2$ of $G'$
such that
\begin{enumerate}[(i)]
 \item
the height changes of $D_1$ and $D_2$ give these corners;
$h(D_1) = \frakc_1$ and $h(D_2) = \frakc_2$, and
 \item
every connected component
of the symmetric difference $D_1 \triangle D_2$
is a zigzag path on $G'$.
\end{enumerate}
\end{itemize}
Then $G'$ has sufficiently many zigzag paths
to ensure that the zigzag polygon contains
the characteristic polygon.
On the other hand,
the characteristic polygon always contains
the zigzag polygon by Corollary \ref{cr:zigzag<char},
and hence they must coincide.
Now one can perform the operation
in Theorem \ref{th:same_zigzag}
and obtain a cancellative dimer model $G''$
with the same characteristic polygon as $G$.

{\em Acknowledgment}.
This project has been initiated
while C.~B. and K.~U. was attending the workshop
`{\em Linking representation theory,
singularity theory and non-commutative algebraic geometry}\,'
at Banff International Research Station,
whose hospitality is gratefully acknowledged.
This research is supported by Grant-in-Aid for Scientific Research (No.18540034)
and Grant-in-Aid for Young Scientists (No.24740043).

\section{Dimer models and quivers}
 \label{sc:dimer}

Let $T = \bR^2 / \bZ^2$
be a real two-torus
equipped with an orientation.
A {\em bicolored graph} on $T$
consists of
\begin{itemize}
 \item a finite set $B \subset T$ of black nodes,
 \item a finite set $W \subset T$ of white nodes, and
 \item a finite set $E$ of edges,
       consisting of embedded closed intervals $e$ on $T$
       such that one boundary of $e$ belongs to $B$
       and the other boundary belongs to $W$.
       We assume that two edges intersect
       only at the boundaries.
\end{itemize}
A {\em face} of a graph is a connected component
of $T \setminus \cup_{e \in E} e$.
The set of faces will be denoted by $F$.
A bicolored graph $G$ on $T$ is called a {\em dimer model}
if $G$ contains no univalent node
and every face $f \in F$ is simply-connected.

A {\em quiver} consists of
\begin{itemize}
 \item a set $V$ of vertices,
 \item a set $A$ of arrows, and
 \item two maps $s, t: A \to V$ from $A$ to $V$.
\end{itemize}
For an arrow $a \in A$,
the vertices $s(a)$ and $t(a)$
are said to be the {\em source}
and the {\em target} of $a$
respectively.
A {\em path} on a quiver
is an ordered set of arrows
$(a_n, a_{n-1}, \dots, a_{1})$
such that $s(a_{i+1}) = t(a_i)$
for $i=1, \dots, n-1$.
We also allow for a path of length zero,
starting and ending at the same vertex.
The {\em path algebra} $\bC Q$
of a quiver $Q = (V, A, s, t)$
is the algebra
spanned by the set of paths
as a vector space,
and the multiplication is defined
by the concatenation of paths;
$$
 (b_m, \dots, b_1) \cdot (a_n, \dots, a_1)
  = \begin{cases}
     (b_m, \dots, b_1, a_n, \dots, a_1) & s(b_1) = t(a_n), \\
      0 & \text{otherwise}.
    \end{cases}
$$
A {\em quiver with relations}
is a pair of a quiver
and a two-sided ideal $\scI$
of its path algebra.
For a quiver $\Gamma = (Q, \scI)$
with relations,
its path algebra $\bC \Gamma$ is defined as
the quotient algebra $\bC Q / \scI$.
Two paths $a$ and $b$ are said to be {\em equivalent}
if they give the same element in $\bC \Gamma$.

A dimer model $(B, W, E)$ encodes
the information of a quiver
$\Gamma = (V, A, s, t, \scI)$
with relations
in the following way:
The set $V$ of vertices
is the set of connected components
of the complement
$
 T \setminus (\bigcup_{e \in E} e),
$
and
the set $A$ of arrows
is the set $E$ of edges of the graph.
The directions of the arrows are determined
by the colors of the nodes of the graph,
so that the white node $w \in W$ is on the right
of the arrow.
In other words,
the quiver is the dual graph of the dimer model
equipped with an orientation given by
rotating the white-to-black flow on the edges of the dimer model
by minus 90 degrees.
The relations of the quiver are described as follows:
For an arrow $a \in A$,
there exist two paths $p_+(a)$
and $p_-(a)$
from $t(a)$ to $s(a)$,
the former going around the white node
connected to $a \in E = A$ clockwise
and the latter going around the black node
connected to $a$ counterclockwise.
Then the ideal $\scI$
of the path algebra is
generated by $p_+(a) - p_-(a)$
for all $a \in A$.

A {\em perfect matching}
on a dimer model $G = (B, W, E)$
is a subset $D$ of $E$
such that for any node $v \in B \cup W$,
there is a unique edge $e \in D$
connected to $v$.
A dimer model is {\em non-degenerate}
if for any edge $e \in E$,
there is a perfect matching $D$
such that $e \in D$.

A dimer model $G = (B, W, E)$ gives
a chain complex
$$
 0 \to \bZ^{F} \to \bZ^{E} \to \bZ^{B \sqcup W} \to 0
$$
computing the homology of $T$.
The orientation on a face comes from the standard orientation
of $T = \bR^2 / \bZ^2$,
and the orientation on an edge is such that
$\partial e = w - b$,
where $w$ and $b$ are the white and the black node
adjacent to the edge $e$.
A perfect matching $D \subset E$ gives a 1-chain
$\sum_{e \in D} e \in \bZ^E$
in this complex,
which will often be written as $D$
by abuse of notation.
By the definition of a perfect matching,
the difference of 1-chains
associated with a pair $(D, D')$ of perfect matchings
is a 1-cycle,
whose class in $H_1(T; \bZ)$ will be denoted by
$[D-D']$.
This class is equivalent to the class
$[D \triangle D']$
of a 1-cycle
supported on the symmetric difference
$
 D \triangle D' = (D \cup D') \setminus (D \cap D').
$
We have $D \triangle D' = - D' \triangle D$ as 1-cycles,
although the underlying sets are identical.

Let
$
 \la -,  - \ra : H_1(T; \bZ) \otimes H_1(T; \bZ) \to \bZ
$
be the intersection pairing.
The Poincar\'{e} dual of $[D \triangle D'] \in H_1(T; \bZ)$
is written as $h(D, D') \in H^1(T; \bZ)$,
and called the {\em height change} of $D$
with respect to the {\em reference matching} $D'$;
$$
 h(D, D')(C) = \la C, [D \triangle D'] \ra,
  \quad \forall C \in H_1(T; \bZ).
$$
We often suppress the reference matching from the notation
and write $h(D) = h(D, D')$.
We will use the isomorphism
$H^1(T; \bZ) \cong \bZ^2$
coming from the identification
$T = \bR^2 / \bZ^2$
to think of a height change
as an element of $\bZ^2$;
$h(D) = (h_x(D), h_y(D)) \in \bZ^2$.
The {\em characteristic polynomial} of $G$
is the generating function
$$
 Z(x, y)
  = \sum_{D \in \Perf(G)}
     x^{h_x(D)} y^{h_y(D)}
$$
for the height change,
which is a Laurent polynomial in two variables.
Its Newton polygon
$$
 \Conv
 \{ (h_x(D), h_y(D)) \in \bZ^2
      \mid \text{$D$ is a perfect matching} \}
$$
is called the {\em characteristic polygon}.
One clearly has
$
 h(D, D'') = h(D, D') - h(D'', D'),
$
so that the characteristic polygon will be translated
if one changes the reference matching.
A perfect matching $D$ is said to be
a {\em corner perfect matching}
if the height change $h(D)$ is at a corner
of the characteristic polygon.
The {\em multiplicity} of a perfect matching $D$
is the number of perfect matchings
whose height change is the same as $D$.

A perfect matching $D$ can be considered as a set of walls
which block some of the arrows.
A path $p$ on the quiver is said to be {\em allowed} by $D$
if $p$ does not contain any arrow contained in $D \subset E = A$.

With a perfect matching,
one can associate a representation of the quiver
with dimension vector $(1, \dots, 1)$
by sending any allowed path to $1$ and
other paths to $0$.
One can easily check
that this satisfies the relation of the quiver.
A perfect matching is said to be {\em simple}
if the associated quiver representation is simple,
i.e., has no non-trivial subrepresentation.
This is equivalent to the condition
that there is an allowed path
starting and ending at any given pair of vertices.


The main theorem of \cite{Ishii-Ueda_08} states that
when a dimer model is non-degenerate,
then the moduli space $\scM_{\theta}$
of $\theta$-stable representations of $\bC \Gamma$
of dimension vector $(1, \ldots, 1)$
is a smooth toric Calabi-Yau 3-fold
for a generic stability parameter $\theta$
in the sense of King \cite{King}.
A toric divisor in $\scM_{\theta}$ gives a perfect matching
so that the stabilizer group of the divisor
is determined by the height change of the perfect matching.

Although the following results are stated
in \cite[Proposition 8.2]{Ishii-Ueda_DMSMCv1}
for cancellative dimer models,
the proof works for any non-degenerate dimer model.

\begin{proposition} \label{pr:corner-matching}
The following hold for a non-degenerate dimer model:
\begin{enumerate}[(i)]
 \item \label{it:simple<->multiplicity-free}
A perfect matching $D$ is simple
if and only if it is multiplicity-free.
 \item \label{it:simple->corner}
A multiplicity-free perfect matching is a corner perfect matching.
\end{enumerate}
\end{proposition}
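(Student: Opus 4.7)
The plan is to use the chain complex
$\bZ^F \to \bZ^E$
and identify a 2-chain $c = \sum_{f \in F} n_f \, f$ with a ``height function''
$n : F \to \bZ$ on the set of quiver vertices.
Unwinding the orientation conventions
(white node on the right of the dual arrow,
together with $\partial e = w - b$)
gives, for every arrow $a$ with dual edge $e$, the identity
\[
 (\partial c)_e = n_{t(a)} - n_{s(a)}.
\]
This is the bridge between 2-chains in the dimer complex
and the allowed-arrow structure of the representation $\rho_D$
attached to a perfect matching $D$.

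For the direction \emph{simple $\Rightarrow$ multiplicity-free} of (i),
suppose $D \ne D'$ are perfect matchings with $h(D) = h(D')$.
Then $D - D'$ is null-homologous on $T$,
so $D - D' = \partial c$ for some 2-chain $c$.
For any arrow $a$ allowed by $D$ (so $e \notin D$),
the identity above gives $[e \in D'] = n_{s(a)} - n_{t(a)} \ge 0$,
i.e.\ allowed arrows are weakly decreasing with respect to $n$.
Hence every sublevel set $\{ f \in F : n_f \le k \}$
is closed under allowed arrows and yields a subrepresentation of $\rho_D$.
Since $D \ne D'$ makes $n$ non-constant (the kernel of $\partial$ on $\bZ^F$ is generated by the fundamental class of $T$),
some sublevel set is a proper nonempty subset of $F$,
contradicting simplicity.
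Conversely, given a nontrivial subrepresentation indexed by a proper nonempty $U \subset F$,
the closure condition forces every arrow $a : u \to v$
with $u \in U$ and $v \notin U$ to be blocked (i.e.\ $e \in D$).
Setting $D' := D - \partial \chi_U$, where $\chi_U \in \bZ^F$ is the characteristic function of $U$,
one checks that $D'$ is $\{0,1\}$-valued on the edges crossing $\partial U$
(using the blocking on outgoing arrows
and, for the ``incoming'' arrows,
a minimal choice of $U$ together with non-degeneracy of the dimer model),
and coincides with $D$ on all other edges.
Since $\partial D' = \partial D$ as 0-chains,
$D'$ is a distinct perfect matching with $h(D') = h(D)$.

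For (ii), assume $D$ is multiplicity-free but $h(D)$ is not a corner.
Then $h(D)$ lies in the relative interior of a positive-dimensional face
(or in the interior) of the characteristic polygon,
so one can pick another perfect matching $D_1 \ne D$
with $h(D_1)$ on the same supporting face and $h(D_1) \ne h(D)$.
Decomposing the symmetric difference $D \triangle D_1$
into its connected 1-cycle components
and using the abundance of allowed paths
guaranteed by the simplicity of $\rho_D$ (via (i)),
I would flip $D$ along a proper subcollection of these components
to obtain a new matching $D'' \ne D$ with $h(D'') = h(D)$,
contradicting multiplicity-freeness.

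The main obstacle I expect is the converse direction of (i):
verifying that $D - \partial \chi_U$ is $\{0,1\}$-valued
on edges whose dual arrow points into $U$ from outside.
This is where non-degeneracy of the dimer model replaces cancellativity
in the argument of \cite[Proposition~8.2]{Ishii-Ueda_DMSMCv1},
and a minimal choice of $U$ should suffice.
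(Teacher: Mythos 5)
The paper itself does not prove this Proposition; it merely cites \cite[Proposition 8.2]{Ishii-Ueda_DMSMCv1} (whose argument in the cancellative setting runs through the toric geometry of the moduli space $\scM_\theta$) and asserts that the argument goes through under mere non-degeneracy. Your attempt is a self-contained combinatorial argument and hence a genuinely different route; it can be made to work for part~(\ref{it:simple<->multiplicity-free}), but the sketch for part~(\ref{it:simple->corner}) has a real gap.

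For (\ref{it:simple<->multiplicity-free}), the direction ``simple $\Rightarrow$ multiplicity-free'' via level sets of the height function $n$ is correct. For the converse, the fix you gesture at --- a minimal choice of $U$ plus non-degeneracy --- is not what is needed: the correct point is local and holds for \emph{every} proper nonempty subrepresentation $U$. Around a white node the dual arrows run clockwise through the incident faces (counter-clockwise around a black node), exactly one of them being blocked by $D$; the closure of $U$ under allowed arrows then forces the $U$-faces at that node to form one contiguous arc terminating at the blocked arrow, so that exactly two edges of the support of $\partial\chi_U$ meet that node, one of them lying in $D$ and the other not. Hence that support decomposes into cycles alternating between $D$ and its complement, and $D' := D + \partial\chi_U$ (with the sign chosen so the blocked arrows out of $U$ cancel) is a perfect matching distinct from $D$ with $h(D') = h(D)$. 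Neither minimality of $U$ nor non-degeneracy of the dimer model is used here.

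For (\ref{it:simple->corner}), the plan to flip $D$ along a proper subcollection of the components of $D \triangle D_1$ cannot succeed. If $D$ is multiplicity-free, the very flip argument used in (\ref{it:simple<->multiplicity-free}) shows that $D \triangle D_1$ can contain no homologically trivial component (one could flip it), nor components of both classes $+p$ and $-p$ (one could flip a pair). So every component of $D \triangle D_1$ carries the \emph{same} nonzero primitive class, no proper nonempty subcollection sums to zero in $H_1(T;\bZ)$, and there is nothing available to flip. The appeal to ``abundance of allowed paths'' does not obviously produce a matching $D'' \ne D$ with $h(D'') = h(D)$, and as written the sketch does not establish part~(\ref{it:simple->corner}).
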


The dimer model $G_1$
in Figure \ref{fg:inconsistent-1}
shows that the converse
to Proposition \ref{pr:corner-matching}.(\ref{it:simple->corner})
does not hold in general.
The corresponding quiver is shown
in Figure \ref{fg:inconsistent-1-quiver}.
The set of perfect matchings
and the characteristic polygon
are shown in Figures \ref{fg:inconsistent-1-matching}
and \ref{fg:inconsistent-1-characteristic_polygon} respectively,
where the perfect matching $D_1$ is chosen
as the reference matching.
This example also shows that
one cannot obtain a cancellative dimer model
with the same characteristic polygon
simply by removing all arrows
not contained in any simple matchings;
if we perform this operation
on the dimer model $G_1$,
then the resulting dimer model $G_2$
shown in Figure \ref{fg:inconsistent-1-simples}
has a smaller characteristic polygon,
which coincides with the convex hull
of height changes of simple perfect matchings.

\begin{figure}[htb]
\begin{minipage}[t]{.5 \linewidth}
\centering
\input{inconsistent-1.pst}
\caption{The dimer model $G_1$}
\label{fg:inconsistent-1}
\end{minipage}
\begin{minipage}[t]{.5 \linewidth}
\centering
\input{inconsistent-1-quiver.pst}
\caption{The quiver $\Gamma_1$}
\label{fg:inconsistent-1-quiver}
\end{minipage}
\end{figure}

\begin{figure}[htb]
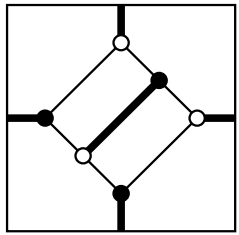
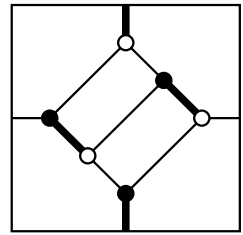
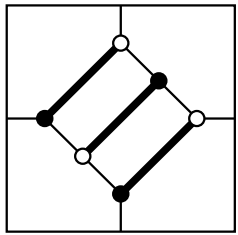
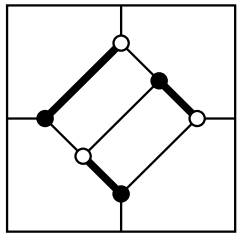
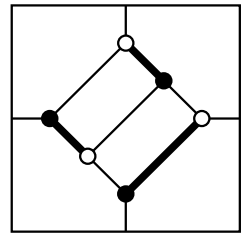

\centering
\subfigure[$h(D_1) = (0, 0)$]{
\input{inconsistent-1-matching1.pst}
\label{fg:inconsistent-1-matching1}}
\hspace{5mm}
\subfigure[$h(D_2) = (-1, 0)$]{
\input{inconsistent-1-matching2.pst}
\label{fg:inconsistent-1-matching2}}
\hspace{5mm}
\subfigure[$h(D_3) = (-1, -1)$]{
\input{inconsistent-1-matching3.pst}
\label{fg:inconsistent-1-matching3}}
\\
\subfigure[$h(D_4) = (0, -1)$]{
\input{inconsistent-1-matching4.pst}
\label{fg:inconsistent-1-matching4}}
\hspace{5mm}
\subfigure[$h(D_5) = (0, -1)$]{
\input{inconsistent-1-matching5.pst}
\label{fg:inconsistent-1-matching5}}
\hspace{5mm}
\subfigure[$h(D_6) = (0, -1)$]{
\input{inconsistent-1-matching6.pst}
\label{fg:inconsistent-1-matching6}}
\caption{Perfect matchings on $G_1$}
\label{fg:inconsistent-1-matching}
\end{figure}

\begin{figure}[htbp]
\begin{minipage}[t]{.5 \linewidth}
\centering
\input{inconsistent-1-characteristic_polygon.pst}
\caption{The characteristic polygon}
\label{fg:inconsistent-1-characteristic_polygon}
\end{minipage}
\begin{minipage}[t]{.5 \linewidth}
\centering
\input{inconsistent-1-simples.pst}
\caption{The union of simple matchings}
\label{fg:inconsistent-1-simples}
\end{minipage}
\end{figure}

\section{Zigzag polygon and cancellativity}
 \label{sc:zigzag}

A {\em zigzag path} is a path on a dimer model
which makes a maximum turn to the right on a white node
and to the left on a black node.
Note that it is not a path on a quiver.
We assume that a zigzag path does not have an endpoint,
so that we can regard a zigzag path
as a sequence $(e_i)_{i \in \bZ}$ of edges $e_i$ parameterized by $i \in \bZ$,
up to translations of $i$.
%
%
The homology class $[z]$ of a zigzag path
considered as an element of $\bZ^2$
will be called its {\em slope}.

Let $k$ be the number of zigzag paths.
Fix a zigzag path $z_1$,
and let $\{ z_i \}_{i=1}^k$ be the set of zigzag paths,
so that their slopes $([z_i])_{i=1}^k$ are cyclically ordered
starting from $[z_1]$.
Note that some of the slopes may coincide in general.
Define another sequence $(w_i)_{i=1}^r$ in $\bZ^2$ by
$w_0 = 0$ and 
$$
 w_{i+1} = w_i + [z_{i+1}]', \qquad i = 0, \dots, k-1,
$$
where $[z_{i+1}]'$ is obtained from $[z_{i+1}]$ by
rotating 90 degrees counter-clockwise.
Note that one has
$
 w_r = 0
$
since every edge is contained in exactly two zigzag paths
with different directions
and hence the homology classes of the zigzag paths add up to zero.
The convex hull of $(w_i)_{i=1}^r$ is called
the {\em zigzag polygon}.

Now we recall the definition of the consistency condition
for dimer models:

\begin{definition}[{\cite[Definition 5.2]{Ishii-Ueda_DMSMCv1}}] \label{df:consistency}
A dimer model is said to be {\em consistent} if
\begin{itemize}
 \item
there is no homologically trivial zigzag path,
 \item
no zigzag path on the universal cover
has a self-intersection, and
 \item
no pair of zigzag paths on the universal cover
intersect each other
in the same direction more than once.
\end{itemize}
\end{definition}

See \cite{Ishii-Ueda_CCDM,Bocklandt_CCDM} for more
on consistency conditions for dimer models.
The consistency condition is equivalent to cancellativity:

\begin{theorem}[{\cite[Theorem 1.1]{Ishii-Ueda_CCDM},
\cite[Theorem 6.2]{Bocklandt_CCDM}}]
A non-degenerate dimer model is consistent
if and only if the path algebra of the associated quiver with relations
is cancellative.
\end{theorem}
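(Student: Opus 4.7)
The plan is to translate algebraic cancellation in $\bC\Gamma$ into a topological statement about paths on the universal cover $\widetilde{G}$ of the dimer model. Call two paths \emph{F-term equivalent} if they are related by a sequence of replacements $p_+(a) \leftrightarrow p_-(a)$; by definition two paths agree in $\bC\Gamma$ if and only if they are F-term equivalent. The preliminary observation is that an F-term move, lifted to $\widetilde{G}$, corresponds to pushing a lifted path across the single face bounded by $p_+(a) \cup p_-(a)$. Hence F-term equivalence is ``homotopy through faces'' on $\widetilde{G}$ with fixed endpoints, and two F-term equivalent paths lift to paths with equal endpoints bounding a finite sum of faces.

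For the forward direction (consistent implies cancellative), assume $ab = ac \ne 0$ in $\bC\Gamma$. Choose a lift so that $\widetilde a \widetilde b$ and $\widetilde a \widetilde c$ start at a common vertex of $\widetilde{G}$; F-term equivalence forces their endpoints to coincide, and hence $\widetilde b$ and $\widetilde c$ have the same endpoints. The task is to upgrade ``same endpoints'' to F-term equivalence of $b$ and $c$ themselves. I would do this by constructing a canonical representative in each F-term class---say, the ``rightmost'' path with given endpoints, obtained by pushing the path across faces to one side whenever possible---and using the consistency hypotheses to show it is well-defined. The absence of a homologically trivial zigzag ensures the pushing process terminates; the absence of a self-intersecting zigzag on $\widetilde{G}$, together with the prohibition of two zigzag paths intersecting twice in the same direction, ensures that distinct F-term classes with common endpoints are separated on $\widetilde{G}$ by zigzag segments, so they never collide. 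Uniqueness of the canonical form then identifies the F-term class of $b$ with that of $c$, yielding $b=c$ in $\bC\Gamma$ and hence cancellativity (the case $ba = ca$ is symmetric).

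For the converse, each violation of consistency produces an explicit cancellation failure. A homologically trivial zigzag bounds a disk on $\widetilde{G}$ whose two boundary arcs give two F-term inequivalent paths between corresponding endpoints; precomposing both with a common incoming arrow makes them equal in $\bC\Gamma$ while keeping the suffixes distinct. A self-intersection of a single zigzag on $\widetilde{G}$, or two zigzags meeting twice in the same direction, similarly yields a bigon region whose two boundary arcs produce two paths that become equal after prepending a common incident arrow but remain F-term inequivalent on their own. Non-degeneracy is used here to ensure that the paths appearing in the bigon are actually realised by non-zero elements of $\bC\Gamma$, so that the equality witnesses a genuine failure of cancellation.

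The main obstacle is the forward direction: rigorously establishing the existence and uniqueness of the canonical representative requires a delicate combinatorial analysis of how F-term moves can cascade across $\widetilde{G}$ and interact with the zigzag structure. The three consistency conditions are designed to provide exactly the rigidity needed to prevent a global rearrangement of moves from altering the initial arrow $a$, but verifying this amounts to the technical heart of the proofs in \cite{Ishii-Ueda_CCDM, Bocklandt_CCDM}.
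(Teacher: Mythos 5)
This theorem is not proved in the paper under review; it is a quoted result, attributed to \cite{Ishii-Ueda_CCDM} and \cite{Bocklandt_CCDM}, and the paper simply uses it as a black box. So there is no ``paper's own proof'' to compare against. Evaluating your proposal on its own terms: it is an honest conceptual outline, and you say explicitly that the forward direction's ``delicate combinatorial analysis'' is being deferred to the cited papers. Since that analysis \emph{is} the theorem, what you have written is a description of what a proof would need to do, not a proof. That is fine as a sketch, but a few of the intermediate assertions need more care even at the sketch level.

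First, the claim that F-term equivalence is ``homotopy through faces'' on the universal cover runs in only one direction cheaply. A single F-term move $p_+(a) \leftrightarrow p_-(a)$ is a homotopy across the bigon $p_+(a)\cup p_-(a)$ (two faces glued along the edge $a$), so F-term equivalent paths bound a $2$-chain. But the converse --- that two paths with the same endpoints bounding a $2$-chain are related by a sequence of F-term moves --- is precisely what has to be proved, and it is exactly here that consistency is used. Treating this equivalence as a ``preliminary observation'' hides the content.

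Second, the ``rightmost path'' normal form requires a termination and confluence argument: you must show the pushing process stops (you invoke the no-trivial-zigzag condition, plausibly) \emph{and} that the result is independent of the order of moves (a Newman-lemma-type local confluence check, which you do not address). Without confluence, uniqueness of the canonical representative --- the step that actually yields $b=c$ --- is unjustified.

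Third, for the converse you assert that each consistency violation produces a bigon whose two boundary arcs are F-term inequivalent; to conclude a cancellation failure you also need that after prepending the common arrow the two longer paths \emph{do} become F-term equivalent and that all paths involved are nonzero. You mention non-degeneracy for the nonvanishing, which is the right instinct, but the F-term equivalence of the extended paths is asserted rather than argued, and in a degenerate situation it can fail.

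Finally, the route you sketch (canonical forms for F-term classes) is closer in spirit to Broomhead's or Mozgovoy--Reineke's treatment than to the cited ones: Ishii--Ueda go through an intermediate ``properly ordered'' condition on zigzag paths, and Bocklandt proceeds through a chain of equivalent consistency conditions, neither of which is built around a rightmost-path normal form. So even as an outline, this is a different strategy from the cited proofs, and its feasibility would have to be argued independently rather than by appeal to those references.
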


The characteristic polygon and the zigzag polygon coincides
for cancellative dimer models:

\begin{theorem}[{\cite[Theorem 3.3]{Gulotta},
 cf. also \cite[Corollary 8.3]{Ishii-Ueda_DMSMCv1}}]
 \label{th:gulotta}
For a consistent dimer model,
the characteristic polygon $\Delta$ coincides with the zigzag polygon
up to translation.
\end{theorem}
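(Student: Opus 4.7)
The plan is to produce, for each edge of the characteristic polygon $\Delta$, a canonical collection of zigzag paths whose rotated slopes trace out the same edge of the zigzag polygon; after going once around the boundary of $\Delta$, the two polygons must then agree up to translation.

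First, since a consistent dimer model is non-degenerate, Proposition \ref{pr:corner-matching} lets me pick, for every corner $\frakc$ of $\Delta$, a (multiplicity-free) corner perfect matching $D_{\frakc}$ with $h(D_{\frakc}) = \frakc$. Fix two adjacent corners $\frakc_1, \frakc_2$ with chosen matchings $D_1, D_2$, and consider $\Sigma := D_1 \triangle D_2$. At every node of $G$, each of $D_1$ and $D_2$ contributes exactly one incident edge, so every node has degree $0$ or $2$ in $\Sigma$; thus $\Sigma$ decomposes into a disjoint union of simple closed curves $\gamma_1, \dots, \gamma_m$ on $T$ whose edges alternate between $D_1$ and $D_2$. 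By construction $\sum_j [\gamma_j] = [D_1 \triangle D_2]$ is Poincar\'{e} dual to $\frakc_1 - \frakc_2$.

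The core step is to show that each $\gamma_j$ is a zigzag path. At each node $v$ on $\gamma_j$, the two edges incident to $v$ come from $D_1$ and $D_2$; I would prove that they make the maximal right turn at a white node and the maximal left turn at a black node. If this failed at some white node $w$, there would be an edge $e'$ at $w$ lying strictly between the two $\Sigma$-edges in the cyclic order, belonging to neither matching. A local rotation at $w$ that redirects along $e'$ can then be propagated using the consistency hypothesis---the absence of self-intersections and of repeated same-direction intersections of zigzag paths on the universal cover is what guarantees that the propagation closes up into a genuine perfect matching---producing a perfect matching whose height change strictly exceeds the segment $\overline{\frakc_1 \frakc_2}$ in the outward normal direction, contradicting the assumption that this segment is an edge of $\Delta$. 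Hence each $\gamma_j$ is a zigzag path, and the classes $[\gamma_j]$ are positive multiples of a common primitive vector proportional to the $90^{\circ}$ rotation of $\frakc_2 - \frakc_1$.

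Finally I would assemble the edges of $\Delta$. As $\frakc$ sweeps cyclically around the corners of $\Delta$, each zigzag path of $G$ appears in exactly one of the symmetric differences $D_{\frakc_i} \triangle D_{\frakc_{i+1}}$, because every edge of $G$ lies in exactly two zigzag paths of opposite orientations and these account for the full cyclic list of slopes. The rotated slopes $[\gamma_j]'$ collected in this cyclic order are therefore precisely the edge vectors used to build the zigzag polygon in Section \ref{sc:zigzag}, and they simultaneously trace out the boundary of $\Delta$ under the Poincar\'e duality rotation. Consequently the two polygons have identical cyclically ordered edge sequences and hence coincide up to translation. The main obstacle is the core step above: converting the corner extremality of $\frakc_1, \frakc_2$ together with consistency into the precise combinatorial statement that each alternating cycle in $D_1 \triangle D_2$ takes the maximal turn at every node.
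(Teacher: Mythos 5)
The paper does not prove Theorem~\ref{th:gulotta}; it is stated as a citation to \cite{Gulotta} and \cite{Ishii-Ueda_DMSMCv1}, so there is no internal proof to compare against. Your sketch does run close in spirit to the machinery of Section~\ref{sc:cancellativization}, but as written it has concrete gaps. The first is in your very first step: Proposition~\ref{pr:corner-matching} asserts that multiplicity-free implies corner, not that every corner of $\Delta$ admits a multiplicity-free matching. The paper's example $G_1$ (Figures~\ref{fg:inconsistent-1}--\ref{fg:inconsistent-1-characteristic_polygon}) shows a corner $(0,-1)$ of multiplicity three, so the implication you want is genuinely false for general dimer models; for \emph{consistent} dimer models it is true, but that is a substantive fact which you must establish separately, not read off Proposition~\ref{pr:corner-matching}.

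The second and central gap is the ``propagation'' argument for the core step. Asserting that a local rotation at a white node ``can then be propagated'' to a perfect matching with height change outside $\Delta$ is precisely the hard combinatorial content, and the paper's own Lemmas~\ref{lm:no_path}, \ref{lm:zigzag_white}, and \ref{lm:remove_e} show how delicate this is: in general one must replace the matchings $D_1, D_2$ by others with the same height change and even delete edges of $G$, and the fact that no deletion is needed in the consistent case reduces (as in the proof of Corollary~\ref{cr:multiplicity-free}) to showing that an offending edge would force some corner matching to have multiplicity. Your final assembly paragraph is also more than is needed and not itself justified; the claim that every zigzag path appears in exactly one $D_{\frakc_i} \triangle D_{\frakc_{i+1}}$ does not follow from ``every edge lies in two zigzag paths.'' Once the boundary symmetric differences are shown to consist of zigzag paths, you get the zigzag polygon containing $\Delta$, and the opposite inclusion is Corollary~\ref{cr:zigzag<char}; combining the two, as in the proof of Theorem~\ref{th:main}, would close the argument without the cyclic-bijection claim. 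So the overall plan is reasonable and compatible with the paper's toolkit, but two of its three steps are currently assertions rather than proofs.
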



Now we prove Theorem \ref{th:same_zigzag}:

\begin{proof}[Proof of Theorem \ref{th:same_zigzag}]
If some zigzag path on the universal cover has a self-intersection,
then by removing all the edges at the self-intersection,
one obtains another bicolored graph on $T^2$
with the same zigzag polygon as the original dimer model.
Figure \ref{fg:zigzag_self-intersection} shows
an example of this operation.
If there is a connected component of the resulting graph
which is contained in a simply-connected domain in $T^2$,
then one can remove this connected component
without changing the zigzag polygon.
By removing all such components,
one obtains a dimer model
which has no zigzag path on the universal cover
with a self-intersection.
\begin{figure}[ht]
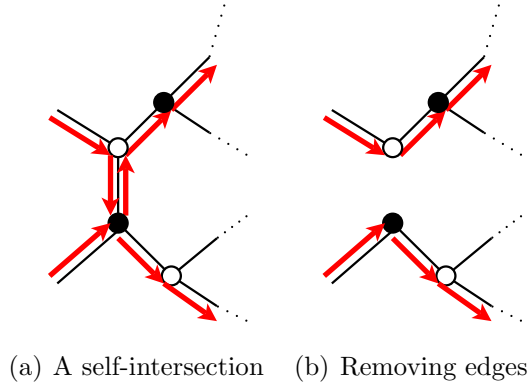

\centering
\subfigure[A self-intersection]
{\input{zigzag_self-intersection1.pst}
\label{fg:zigzag_self-intersection1}}
\subfigure[Removing edges]
{\input{zigzag_self-intersection2.pst}
\label{fg:zigzag_self-intersection2}}
\caption{A self-intersecting zigzag path
on the universal cover}
\label{fg:zigzag_self-intersection}
\end{figure}

If there is a homologically trivial zigzag path $z$,
then there are two cases;
either there is at least one edge inside the zigzag path $z$,
or there is no such edge.
If there is an edge inside the zigzag path,
take any zigzag path $w$ which intersects $z$.
Then $z$ and $w$ intersect in the same direction more than once,
and one can remove edges at the intersections
to obtain another dimer model.
If there are no edge inside the zigzag path $z$,
then every other node in $z$ is divalent,
and one can remove all these divalent nodes
and contract all other nodes to a single node.
Figure \ref{fg:trivial_zigzag} shows an example
of these operations.
\begin{figure}[ht]
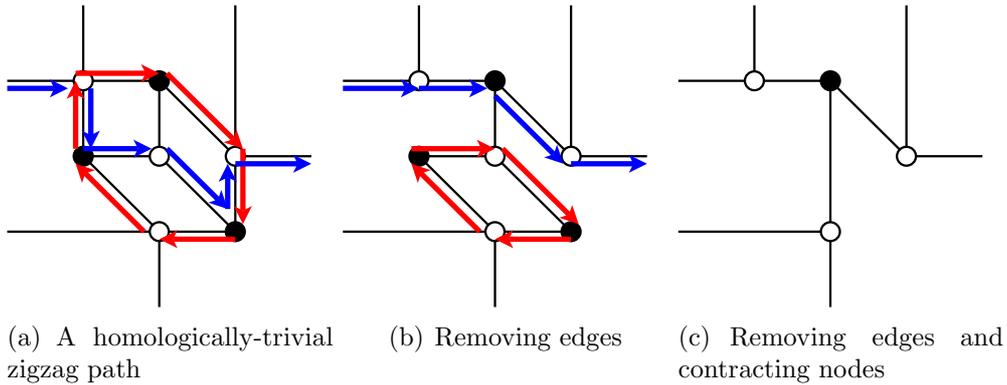

\centering
\subfigure[A homologically-trivial zigzag path]
{\input{trivial_zigzag1.pst}
\label{fg:trivial_zigzag1}}
\subfigure[Removing edges]
{\input{trivial_zigzag2.pst}
\label{fg:trivial_zigzag2}}
\subfigure[Removing edges and contracting nodes]
{\input{trivial_zigzag3.pst}
\label{fg:trivial_zigzag3}}
\caption{Homologically trivial zigzag paths}
\label{fg:trivial_zigzag}
\end{figure}

If there is a pair of zigzag paths on the universal cover
which intersect each other more than once
in the same direction,
choose any such pair of zigzag paths
and remove the edges
at a pair of consecutive intersections
of this pair of zigzag paths.
The resulting graph on the torus $T$
has the same set of slopes of zigzag paths,
and the non-degeneracy of the zigzag polygon implies
that this graph is still a dimer model
(i.e., there are no univalent node
and all the faces are simply-connected).

One can iterate these operations finitely many times
until the dimer model becomes cancellative.
\end{proof}

For example,
the dimer model $G_1$
in Figure \ref{fg:inconsistent-1}
has three zigzag paths as shown in Figure \ref{fg:inconsistent-1-zigzag}.
The corresponding zigzag polygon
is shown in Figure \ref{fg:inconsistent-1-zigzag_polygon}.
A pair of lifts of the zigzag path
shown in Figure \ref{fg:inconsistent-1-zigzag}.\ref{fg:inconsistent-1-zigzag1}
intersects in the same direction twice
on the universal cover
as shown in Figure \ref{fg:inconsistent-1-zigzag-intersection}.
Under the operation of `cancellativization'
in Theorem \ref{th:same_zigzag},
the pair of edges at these intersections will be removed,
and one obtains the dimer model
shown in Figure \ref{fg:inconsistent-1-simples}.

Corollary \ref{cr:zigzag<char} is an immediate consequence
of Theorem \ref{th:same_zigzag}:

\begin{proof}[Proof of Corollary \ref{cr:zigzag<char}]
The operation of cancellativization
in the proof of Theorem \ref{th:same_zigzag}
does not change the zigzag polygon,
but makes the characteristic polygon smaller in general.
Since characteristic polygon
and the zigzag polygon coincide
for a cancellative dimer model,
the zigzag polygon is smaller
than the characteristic polygon in general.
\end{proof}

\begin{figure}[htb]
\centering
\subfigure[A zigzag path with homology class $(1, -1)$]
{\input{inconsistent-1-zigzag1.pst}
\label{fg:inconsistent-1-zigzag1}} \hspace{5mm}
\subfigure[A zigzag path with homology class $(0, 1)$]
{\input{inconsistent-1-zigzag2.pst}
\label{fg:inconsistent-1-zigzag2}} \hspace{5mm}
\subfigure[A zigzag path with homology class $(-1, 0)$]
{\input{inconsistent-1-zigzag3.pst}
\label{fg:inconsistent-1-zigzag3}}
\caption{Zigzag paths on $G_1$}
\label{fg:inconsistent-1-zigzag}
\end{figure}

\begin{figure}[htbp]
\begin{minipage}[t]{.5 \linewidth}
\centering
\input{inconsistent-1-zigzag_polygon.pst}
\caption{The zigzag polygon of $G_1$}
\label{fg:inconsistent-1-zigzag_polygon}
\end{minipage}
\begin{minipage}[t]{.5 \linewidth}
\centering
\input{inconsistent-1-zigzag-intersection.pst}
\caption{Intersections of zigzag paths on $G_1$}
\label{fg:inconsistent-1-zigzag-intersection}
\end{minipage}
\end{figure}

\begin{remark}
The dimer model $G_1$ gives an example
where one can not obtain a cancellative dimer model
by the following simple operation:
\begin{itemize}
 \item
Take any generic stability parameter $\theta$
and contract all arrows
which does not vanish in any $\theta$-stable representations.
\end{itemize}
\end{remark}

\begin{proof}
Note that the height change $(0, -1)$ has multiplicity three.
Take a generic stability parameter
which makes the perfect matching $D_4$ stable.
Three other corner perfect matchings
$D_1$, $D_2$ and $D_3$ are simple,
so that they are stable for any stability parameter.
Now one can see that every arrow of $Q$
goes to zero in at least one $\theta$-stable representation
of dimension vector $(1, \ldots, 1)$.
\end{proof}

\section{Characteristic polygon and cancellativity}
 \label{sc:cancellativization}

We can always assume that a dimer model is non-degenerate
without changing the characteristic polygon:

\begin{proposition} \label{pr:non-degenerate}
Let $G$ be a dimer model
with a non-degenerate characteristic polygon.
Then one can remove some nodes and edges from $G$
to obtain a non-degenerate dimer model $G'$
with the same characteristic polygon as $G$.
\end{proposition}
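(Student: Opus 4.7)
The plan is to delete all edges not appearing in any perfect matching of $G$, clean up any residual degenerate structure, and finally verify that the resulting graph is a dimer model using the non-degeneracy of the characteristic polygon.

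First, I would let $E_0 \subseteq E$ denote the set of edges not contained in any perfect matching and set $G_1 := (B, W, E \setminus E_0)$. Since every $D \in \Perf(G)$ is contained in $E \setminus E_0$, one has $\Perf(G_1) = \Perf(G)$, so the characteristic polygon of $G_1$ equals that of $G$; moreover every node is still incident to at least one edge, because every node is covered by some perfect matching.

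Next, I would clean up residual degeneracies. If a node $v$ becomes univalent in $G_1$ via a remaining edge $e'$ to a node $w$, then $e'$ must lie in every $D \in \Perf(G)$; consequently every other edge at $w$ lies in $E_0$, $w$ is also univalent, and $\{v,w,e'\}$ forms an isolated edge component. Similarly, any connected component of $G_1$ contained in a simply-connected region of $T$ is \emph{floating}: its local matchings contribute trivial homology, so fixing one matching on such a component and deleting it preserves all height changes. Remove all isolated edge components and all floating components, and let $G'$ be the resulting graph; by construction $\Perf(G')$ is in height-preserving bijection with $\Perf(G)$, so the characteristic polygon is preserved.

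Finally, I would verify that every face of $G'$ is simply-connected. Suppose for contradiction that some face $f$ is not. Since no floating component remains in the interior of $f$, no boundary component of $f$ bounds a disk in $T$, hence $f$ contains a loop $\gamma$ with $[\gamma] \neq 0 \in H_1(T;\bZ)$. Because $\gamma$ lies in the interior of $f$, it meets no edge of $G'$, so
$$
 \la [\gamma], [D \triangle D']\ra = 0 \qquad \text{for all }D,D' \in \Perf(G').
$$
Hence every height change $h(D) \in \bZ^2$ pairs identically with $[\gamma]$, forcing the characteristic polygon to lie in an affine line of $\bZ^2$, which contradicts the non-degeneracy hypothesis. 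The bookkeeping of the first two steps is essentially formal once one observes $\Perf(G_1) = \Perf(G)$; the main obstacle is this final step, where the non-degeneracy of the characteristic polygon is used, via the intersection pairing, to rule out non-simply-connected faces produced by the edge deletion.
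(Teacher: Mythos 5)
Your proof is correct and follows the same strategy as the paper: delete all edges lying in no perfect matching, then delete components sitting inside simply-connected regions of $T$, and observe the characteristic polygon is unchanged. The paper simply \emph{asserts} that the resulting graph is a dimer model; you supply the missing justification that every face is simply-connected, via the intersection pairing and the non-degeneracy hypothesis, which is exactly where that hypothesis enters and is a genuine gap in the paper's exposition that you correctly fill. One small imprecision: removing a floating component $H$ gives a height-preserving \emph{surjection} $\Perf(G_1)\to\Perf(G_1\setminus H)$ (with fibers $\Perf(H)$), not a bijection; this does not affect the conclusion since only the set of height changes matters for the characteristic polygon.
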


\begin{proof}
Let $G''$ be the bicolored graph on $T$
whose set $E''$ of edges consists of edges of $G$
contained in at least one perfect matching of $G$,
and whose set of nodes consists of nodes of $G$
incident to at least one edge in $E''$.
Then $G''$ is clearly a non-degenerate graph.
In order to make $G''$ into a dimer model,
one removes all connected components of $G''$
having a simply-connected neighborhood in $T$.
The resulting graph $G'$ is a dimer model
(i.e. no node is univalent and
every connected component of $T \setminus G'$ is simply-connected)
having the same characteristic polygon as $G$.
\end{proof}

Let $G$ be a non-degenerate dimer model,
and consider a pair $(D_1, D_2)$ of perfect matchings.
Recall from Section \ref{sc:dimer}
that the homology class $[D_1 \triangle D_2]$ is Poincar\'{e} dual
to the height change $h(D_1, D_2)$.

\begin{lemma} \label{lm:components}
Let $D_1$ and $D_2$ be perfect matchings with
$
 v := [D_1 \triangle D_2]
  \ne 0 \in H_1(T, \bZ).
$
If the homology class of a connected component of $D_1 \triangle D_2$ 
is non-zero, it is one of the two primitive elements in $\bQ v \cap H_1(T, \bZ)$. 
Moreover, if either $D_1$ or $D_2$ is a corner perfect matching,
then it is the primitive element in $\bQ_+ v \cap H_1(T, \bZ)$.
\end{lemma}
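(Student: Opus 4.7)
The plan is topological. First I would verify the structure of $D_1 \triangle D_2$: at every node of $G$ exactly one edge of $D_1$ and one edge of $D_2$ is incident, so every node has degree either $0$ or $2$ in $D_1 \triangle D_2$. Hence each connected component $C$ of $D_1 \triangle D_2$ is a simple closed curve on $T$, distinct components are pairwise vertex-disjoint (since a shared vertex would have degree $\geq 4$), and the 1-cycle $D_1 - D_2$ decomposes as $\sum_i [C_i]$ with $\sum_i [C_i] = v$.

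Next I would use that distinct components are disjoint subsets of $T$: the intersection pairing on $H_1(T;\bZ) \cong \bZ^2$ is the standard symplectic form, which is non-degenerate, so $\langle [C_i], [C_j] \rangle = 0$ for $i \neq j$ forces all non-zero $[C_i]$ to lie on a common rational line through the origin. Combined with the classical fact that a simple closed curve on a torus is either null-homologous or represents a primitive class in $H_1(T;\bZ)$, every non-zero $[C_i]$ is a primitive element on that line. Since $\sum_i [C_i] = v \neq 0$, the common line must be $\bQ v$, so every non-zero $[C_i]$ equals $\pm v'$, where $v'$ is the primitive generator of $\bQ v \cap H_1(T;\bZ)$ with the sign chosen so that $v = k v'$ for a positive integer $k$. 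This proves the first assertion.

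For the moreover clause, I would assume for contradiction that some $C_i$ has class $-v'$, and suppose first that $D_1$ is a corner perfect matching. The key move is the swap: $D_1' := D_1 \triangle C_i$ is another perfect matching, because at each node of $C_i$ one of the two incident edges of $C_i$ lies in $D_1$ and the other in $D_2$, so the swap preserves the unique-edge-at-each-node condition. A direct 1-chain calculation gives $D_1' - D_1 = -[C_i]$ as cycles, and hence $h(D_1') = h(D_1) - \mathrm{PD}([C_i]) = h(D_1) + \mathrm{PD}(v')$. On the other hand $h(D_2) = h(D_1) - k\, \mathrm{PD}(v')$ with $k \geq 1$, so
\[
 h(D_1) = \tfrac{1}{k+1} h(D_2) + \tfrac{k}{k+1} h(D_1')
\]
lies in the relative interior of the segment joining $h(D_2)$ and $h(D_1')$, contradicting the assumption that $h(D_1)$ is a vertex of the characteristic polygon. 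If instead $D_2$ is the corner, swapping $C_i$ out of $D_2$ gives $D_2'$ with $h(D_2') = h(D_2) - \mathrm{PD}(v')$, and the symmetric collinearity $h(D_2) = \tfrac{1}{k+1} h(D_2') + \tfrac{k}{k+1} h(D_1)$ again contradicts cornerness.

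The main obstacle I anticipate is purely bookkeeping: one must correctly align the orientation of $[C_i]$ as a summand of the 1-cycle $D_1 - D_2$ with the orientation induced on $D_1' - D_1$ by the swap, and keep track of which direction on $\bQ v$ counts as positive, since an incorrect sign inverts the conclusion of the moreover clause. Once that accounting is fixed, the remainder is a brief topological and convex-geometric argument.
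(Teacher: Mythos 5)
Your proposal is correct and follows essentially the same route as the paper: disjointness of the components of $D_1 \triangle D_2$ forces proportional (hence, by primitivity of simple closed curves on $T$, identical up to sign) homology classes, and swapping a negatively-oriented component out of the corner matching produces a third perfect matching whose height change strictly sandwiches $h(D_1)$ (resp. $h(D_2)$) on a segment, contradicting cornerness. One small bookkeeping slip: in the $D_2$ case the convex-combination coefficients should be $\frac{k}{k+1}\,h(D_2') + \frac{1}{k+1}\,h(D_1)$ rather than what you wrote, but since $h(D_2') ,h(D_2), h(D_1)$ are collinear with $h(D_2)$ strictly between the other two, the contradiction goes through regardless.
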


\begin{proof}
Note that two cycles on a torus can be disjoint
only if their homology classes are proportional to each other.
Since $D_1 \triangle D_2$ is homeomorphic to the disjoint union of copies of $S^1$,
we obtain the first assertion.
Assume there is a connected component $w$ of $D_1 \triangle D_2$ whose homology
class is in $\bQ_- v$.
Then we can construct another perfect matching $D_3$
with $D_1 \triangle D_3 =w$.
The height change $h(D_1)$ of $D_1$ lies
on the line segment connecting $h(D_2)$ and $h(D_3)$,
so that $D_1$ is not a corner perfect matching.
By the same reasoning,
$D_2$ is not a corner perfect matching either.
\end{proof}

Fix a pair $(D_1, D_2)$ of corner perfect matchings
whose height changes are adjacent in the counter-clockwise order.

\begin{lemma} \label{lm:intersection}
For any perfect matching $D_3$
whose height change is not on the line segment
connecting $D_1$ and $D_2$,
one has
$$
 \la [D_1 \triangle D_2], [D_2 \triangle D_3] \ra > 0,
$$
where $\la - , - \ra$ denotes the intersection pairing on $H_1(T, \bZ)$.
\end{lemma}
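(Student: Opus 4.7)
The plan is to translate the intersection pairing into a determinant of height change vectors, and then to interpret that determinant as (twice) the signed area of a triangle in the plane of the characteristic polygon. The hypothesis will then force the signed area to be positive.

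First I would reduce the intersection pairing to a determinant of height changes. By Poincar\'{e} duality, $[D_i \triangle D_j] \in H_1(T;\bZ)$ corresponds to $h(D_i, D_j) = h(D_i) - h(D_j) \in H^1(T;\bZ) \cong \bZ^2$, and the intersection pairing on $H_1$ is carried by $PD$ to the cup product on $H^1$, which in the coordinates $(h_x, h_y)$ induced by $T = \bR^2/\bZ^2$ equals the $2 \times 2$ determinant. Unwinding these identifications gives
\[
\langle [D_1 \triangle D_2],\, [D_2 \triangle D_3] \rangle
 = \det\bigl(h(D_1) - h(D_2),\; h(D_2) - h(D_3)\bigr).
\]
A bilinear expansion of the right-hand side then yields
\[
\det\bigl(h(D_1) - h(D_2),\; h(D_2) - h(D_3)\bigr)
 = \det\bigl(h(D_2) - h(D_1),\; h(D_3) - h(D_1)\bigr),
\]
which is twice the signed area of the ordered triangle with vertices $h(D_1), h(D_2), h(D_3)$.

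It then suffices to show that this signed area is strictly positive. By definition, the characteristic polygon is the convex hull of the set of all height changes, so $h(D_3)$ lies in it. Since $h(D_1)$ and $h(D_2)$ are adjacent corners in the counter-clockwise order, the interior of the polygon lies strictly to the left of the directed edge from $h(D_1)$ to $h(D_2)$; in particular, every point of the polygon that is not on the closed segment $\overline{h(D_1) h(D_2)}$ lies strictly on that left side. The hypothesis on $D_3$ excludes exactly the boundary case, so $h(D_3)$ is strictly on the left, which is equivalent to positivity of the signed area, and hence of the intersection pairing.

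The only real obstacle is a bit of sign bookkeeping in the first step: one must check that the combination of the orientation of $T$, the Poincar\'{e} duality map used to define $h$, and the counter-clockwise convention on the characteristic polygon are all consistent, so that counter-clockwise adjacency produces the strict inequality $>0$ rather than $<0$. Once the signs are pinned down, the lemma reduces to the elementary geometric observation that a convex polygon lies on one side of each of its edges.
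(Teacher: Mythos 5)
Your proposal is correct and is essentially the paper's argument, just fully spelled out: the paper's one-line proof cites exactly the two facts you use, that $[D_1 \triangle D_2]$ is the Poincar\'{e} dual of $h(D_1,D_2)$ (which converts the intersection number into the determinant $\det\bigl(h(D_1)-h(D_2),\,h(D_2)-h(D_3)\bigr)$) and the convexity of the characteristic polygon (which forces $h(D_3)$ strictly to the left of the counter-clockwise edge from $h(D_1)$ to $h(D_2)$). Your sign bookkeeping and the reduction to the signed area of the triangle are the implicit content of the paper's sentence.
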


\begin{proof}
This follows from the fact that $[D_1 \triangle D_2]$
is the Poincare dual of the relative height change $h(D_1, D_2)$
and the definition of the characteristic polygon.
\end{proof}

\begin{example}
Consider the dimer model $G_1$
given in Section \ref{sc:dimer}.
The cycles $[D_1 \triangle D_2]$ and $[D_2 \triangle D_3]$
are shown in Figures
\ref{fg:inconsistent-1-D1D2} and \ref{fg:inconsistent-1-D2D3} respectively,
which indeed satisfies
$$
 \la [D_1 \triangle D_2], [D_2 \triangle D_3] \ra > 0.
$$
\begin{figure}[ht]
\begin{minipage}[t]{.5 \linewidth}
\centering
\input{inconsistent-1-D1D2.pst}
\caption{The cycle $[D_1 \triangle D_2]$}
\label{fg:inconsistent-1-D1D2}
\end{minipage}
\begin{minipage}[t]{.5 \linewidth}
\centering
\input{inconsistent-1-D2D3.pst}
\caption{The cycle $[D_2 \triangle D_3]$}
\label{fg:inconsistent-1-D2D3}
\end{minipage}
\end{figure}
\end{example}

Lemma \ref{lm:intersection} can be rephrased as follows:

\begin{corollary} \label{cr:intersection}
If one goes along $D_1 \triangle D_2$ and
count the number of edges in $D_3$
connected to $D_1 \triangle D_2$ from the left,
then the number of edges of $D_3$ connected to white nodes is larger
than the number of those connected to black nodes.
The opposite inequality holds
if we count the number of edges of $D_3$
connected to $D_1 \triangle D_2$ from the right.
\end{corollary}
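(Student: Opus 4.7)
The plan is to interpret the intersection number $\la [D_1 \triangle D_2], [D_2 \triangle D_3] \ra$, which is positive by Lemma \ref{lm:intersection}, as a signed count of transverse crossings of perturbed representatives, and then match the outcome to the stated inequality.

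I would first perturb the 1-cycle $C := D_1 - D_2$ into a smooth 1-cycle $\widetilde{C}$ by pushing it slightly into the faces lying on its left: each oriented edge of $C$ is replaced by a parallel arc inside the face on its left, and near each node $v$ of $C$ the two parallel arcs are joined by a short arc that encircles $v$ on the left side, crossing transversely precisely those edges of $G$ incident to $v$ that lie strictly between $e_{\mathrm{in}}$ and $e_{\mathrm{out}}$ in the counterclockwise cyclic order around $v$ (the ``left'' edges at $v$). Since the parallel arcs lie inside faces and $C' := D_2 - D_3$ is supported on the $1$-skeleton, every transverse crossing of $\widetilde C$ with $C'$ occurs inside one of these short arcs.

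Next I enumerate the crossings. At a node $v$ of $C$, the edges of $C'$ incident to $v$ are exactly the $D_2$- and $D_3$-edges at $v$ whenever these differ. Unpacking the orientation $C = D_1 - D_2$, one sees that the $D_2$-edge at $v$ coincides with $e_{\mathrm{out}}$ at a white node and with $e_{\mathrm{in}}$ at a black node, so it always lies on $C$ and is never crossed by the arc. Consequently the arc at $v$ crosses $C'$ if and only if the $D_3$-edge at $v$ differs from both the $D_1$- and $D_2$-edges at $v$ (so that it is a third edge, off of $C$) and lies on the left side of $C$ at $v$; let $L_w$ and $L_b$ denote the number of such occurrences at white and black nodes, respectively. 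The main local calculation now computes the sign of the resulting crossing. Using the convention $\partial e = w - b$, the chain tangent of an edge at $v$ points toward $v$ at a white node and away from $v$ at a black node; meanwhile the tangent of $\widetilde C$ along the short arc is $\vec e$ rotated by $-90^{\circ}$, because the left-side arc circulates clockwise around $v$. A two-line determinant then gives sign $-1$ at a white node and $+1$ at a black node; multiplying by the coefficient $-1$ of the $D_3$-edge in $C' = D_2 - D_3$, one obtains contribution $+1$ at white and $-1$ at black. Summing gives
\[
  \la [D_1 \triangle D_2], [D_2 \triangle D_3] \ra \;=\; L_w - L_b \;>\; 0,
\]
which is the first inequality. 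The second follows by running the whole argument with $\widetilde C$ pushed to the right instead: every sign flips, and the intersection number becomes $R_b - R_w > 0$, i.e.\ $R_w < R_b$.

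The nontrivial step is the bookkeeping of three orientation conventions simultaneously: the chain orientation of $C$ coming from $D_1 - D_2$ (not merely the set $D_1 \triangle D_2$), the clockwise direction in which the left-side arc encircles its vertex, and the coefficient of the $D_3$-edge inside $D_2 - D_3$. None of these is difficult in isolation, but they must combine correctly to yield the sign of a single crossing, and I expect that to be the only subtle part of the argument.
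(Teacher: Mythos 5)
Your argument is correct in substance and is, in spirit, exactly what the paper intends: the paper offers Corollary~\ref{cr:intersection} as ``a rephrasing'' of Lemma~\ref{lm:intersection} and gives no proof, so the content of the corollary is precisely the geometric interpretation of the intersection number that you spell out — perturb $D_1-D_2$ to one side, note that all transverse crossings with $D_2-D_3$ occur on the short arcs around nodes of $D_1\triangle D_2$, and read off the local signs. Your identification of which $D_3$-edges contribute (exactly those lying strictly to one side and not on $C$), your observation that the $D_2$-edge at a node of $C$ is always on $C$ and never crossed, and your final determinant signs ($+1$ at white nodes, $-1$ at black nodes after accounting for the $-1$ coefficient of $D_3$ in $C'$) are all correct, and they yield $L_w-L_b=\la [D_1\triangle D_2],[D_2\triangle D_3]\ra>0$ as desired, with the mirror argument giving the right-hand inequality.

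Two small inaccuracies in the bookkeeping are worth flagging, though neither affects the conclusion. First, with the orientation $\partial e=w-b$, the chain $D_1-D_2$ is traversed from black to white along $D_1$-edges and from white to black along $D_2$-edges, and the arc pushed to the \emph{left} circles each node clockwise and crosses the edges lying strictly between $e_{\mathrm{out}}$ and $e_{\mathrm{in}}$ in the \emph{counterclockwise} cyclic order around $v$ (you wrote between $e_{\mathrm{in}}$ and $e_{\mathrm{out}}$, which names the right-hand interval). Second, the tangent of $\widetilde C$ along the clockwise arc is the \emph{outward radial} direction rotated by $-90^{\circ}$; since the chain tangent $\vec e$ points toward $v$ at a white node and away from $v$ at a black node, this is $\vec e$ rotated by $+90^{\circ}$ at white nodes and by $-90^{\circ}$ at black nodes, not $-90^{\circ}$ uniformly. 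This is in fact what produces the opposite determinant signs at the two colors, which your stated signs already reflect correctly — so your summary sentence about the rotation is inconsistent with the determinant signs you then report, but the latter are the right ones.
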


If some connected component $c$ of $D_1 \triangle D_2$
is homologically trivial,
then we can replace $D_1$
by another perfect matching $D_1'$
satisfying $D_1 \triangle D_1' = c$.
Then one has $h(D_1) = h(D_1') + [c] = h(D_1')$.
By continuing this process,
we may assume that $D_1 \triangle D_2$ does not have
any homologically trivial components.

Let $n$ be the number of
homologically non-trivial connected components of $D_1 \triangle D_2$.
We label these connected components as $\{ z_i \}_{i \in \bZ / n \bZ}$
in such a way that $z_i$ is right next to $z_{i-1}$ on the right
as shown in Figure \ref{fg:no_path}.

\begin{figure}[ht]
\centering
\input{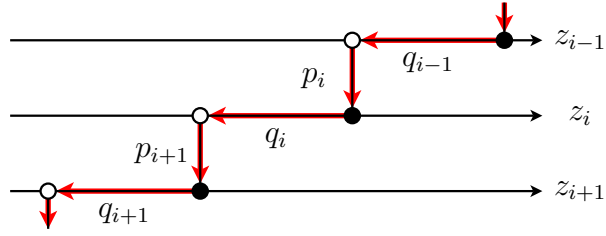}
\caption{The paths $z_i$, $p_i$ and $q_i$}
\label{fg:no_path}
\end{figure}

\begin{lemma} \label{lm:no_path}
There is a connected component $z_i$
with the following property:
\begin{itemize}
 \item
There is no path $p$ consisting of edges of $G$
satisfying the following conditions:
\begin{enumerate}[(i)]
 \item
The path $p$ is homeomorphic to the interval $[0,1]$.
 \item \label{it:non-crossing}
Every other edge of $p$ belongs to $D_1\cap D_2$.
 \item
The path $p$ connects a white node on $z_{i-1}$ to a black node on $z_i$.
 \item
The edge containing the white node $p \cap z_{i-1}$ is on the right of $z_{i-1}$
and the edge containing the black node $p \cap z_i$ is on the left of $z_i$.
\end{enumerate}
\end{itemize}
\end{lemma}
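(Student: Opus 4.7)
The plan is to argue by contradiction. Suppose that for every $i \in \bZ / n \bZ$ there is a path $p_i$ satisfying the four conditions. I would build from these paths a third perfect matching $D_3$ whose height change violates Lemma \ref{lm:intersection}.

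First I would analyze the local structure of an interior vertex of $p_i$. Since alternate edges of $p_i$ lie in $D_1 \cap D_2$, the remaining edges have both endpoints already saturated by such edges in both $D_1$ and $D_2$, and so they must lie in $E \setminus (D_1 \cup D_2)$. The same observation forbids $p_i$ from meeting any $z_j$ at an interior vertex: at such a vertex the $D_1$-edge would be a $z_j$-edge in $D_1 \setminus D_2$, contradicting the fact that the adjacent $p_i$-edge in $D_1$ belongs to $D_1 \cap D_2$. Consequently each $p_i$ lies in the closed strip of $T$ bounded by $z_{i-1}$ and $z_i$, meeting those cycles only at its two endpoints.

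Next I would splice the $p_i$ into a single closed cycle. Writing $b_i$ for the black endpoint of $p_i$ on $z_i$ and $w_i$ for the white initial point of $p_{i+1}$ on $z_i$, let $q_i$ be the arc of $z_i$ from $b_i$ to $w_i$ whose first and last edges are the $D_1$-edges of $z_i$ at $b_i$ and $w_i$ respectively. The concatenation
$$
 C := p_1 \cup q_1 \cup p_2 \cup q_2 \cup \cdots \cup p_n \cup q_n
$$
is a simple closed curve on $T$, since each $p_i$ is confined to its own strip and each $q_i$ to its own $z_i$. Its edges alternate between $D_1$ and its complement in $E$, both along each $p_i$ (alternating $D_1 \cap D_2$ with $E \setminus (D_1 \cup D_2)$) and along each $q_i$ (alternating $D_1 \setminus D_2$ with $D_2 \setminus D_1$), and the choice of $q_i$ guarantees the same alternation at the junctions $b_i, w_i$. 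I can therefore toggle $D_1$ along $C$ to obtain a new perfect matching $D_3 := (D_1 \setminus C) \cup (C \setminus D_1)$ with $D_1 \triangle D_3 = C$ as 1-cycles.

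Finally I would read off the homology class: since $C$ visits the $n$ strips between consecutive $z_i$'s cyclically and returns to its start, it crosses the torus once in the direction transverse to $v = [z_i]$, so $[C] = k v + \varepsilon v^\perp$ with $\varepsilon \neq 0$, where $v^\perp$ is a primitive transverse class. In particular $h(D_3)$ does not lie on the line through $h(D_1)$ and $h(D_2)$, so Lemma \ref{lm:intersection} yields
$$
 \langle [D_1 \triangle D_2],\, [D_2 \triangle D_3] \rangle
  = \langle n v,\, -n v + [C] \rangle
  = n \varepsilon \langle v, v^\perp \rangle > 0.
$$
The hard part is the sign check: I expect that the uniformly prescribed direction of the $p_i$ (starting on the \emph{right} of $z_{i-1}$, ending on the \emph{left} of $z_i$) forces $\varepsilon \langle v, v^\perp \rangle$ to be negative, which then delivers the contradiction. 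Carrying this out requires careful bookkeeping of the orientation on $T$, the induced orientation of each $z_i$ as a summand of the 1-cycle $D_1 - D_2$, and the sign conventions underlying the phrases ``right of $z_{i-1}$'' and ``left of $z_i$''.
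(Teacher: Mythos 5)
Your construction matches the paper's: you build the same auxiliary paths $p_i$, splice them together with arcs of the $z_i$'s, and toggle to produce a new perfect matching $D_3$. In fact your $D_3 = D_1 \triangle C$ and the paper's $D_3 = D_2 \triangle y$ are literally the same matching (your $q_i$ is the forward arc of $z_i$ from $b_i$ to $w_i$, the paper's is the backward one; since the two arcs partition $z_i$, the difference $C \triangle y$ is exactly $D_1 \triangle D_2$, which cancels). The preliminary observations — that interior edges of $p_i$ not in $D_1 \cap D_2$ lie in $E \setminus (D_1 \cup D_2)$, and that $p_i$ cannot meet any $z_j$ at an interior vertex — are correct and worth making explicit.

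However, there is a genuine gap at the end, and you flag it yourself: the proof rests entirely on the claim that $\varepsilon\langle v, v^\perp\rangle < 0$, and you only say you \emph{expect} this. That sign is the whole content of the argument; without verifying it, you have shown that $h(D_3)$ leaves the line through $h(D_1)$ and $h(D_2)$ transversally, but not that it leaves on the forbidden side. Pinning the sign down requires you to commit to an orientation convention on $T$, to the induced orientation on each $z_i$ as a summand of $D_1 - D_2$, and to the meaning of ``left of $z_i$'' and ``$z_i$ is on the right of $z_{i-1}$,'' and then to track how your simple closed curve $C$ crosses each strip. None of that bookkeeping appears.

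The paper avoids exactly this by using Corollary \ref{cr:intersection} rather than Lemma \ref{lm:intersection}. That corollary has already absorbed the orientation conventions once and for all, turning the inequality $\langle [D_1 \triangle D_2], [D_2 \triangle D_3]\rangle > 0$ into a purely local count: walking along $D_1 \triangle D_2$, the $D_3$-edges attached from the left must hit white nodes strictly more often than black nodes. With your $D_3$, the only $D_3$-edges attached to $D_1 \triangle D_2$ from either side are the first and last edges of the $p_i$'s (all $D_2$-edges at nodes of $D_1 \triangle D_2$ lie on $D_1 \triangle D_2$ itself, and the $q_i$-edges in $D_3$ likewise lie on $z_i$), and the ones attached from the left are the last edges of the $p_i$'s, which by condition (iv) all meet the black nodes $b_i$. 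So every left-attached $D_3$-edge hits a black node — an immediate contradiction with Corollary \ref{cr:intersection}. I would strongly recommend replacing your homology computation with this application of Corollary \ref{cr:intersection}: it closes the gap and turns your proposal into a complete proof that is essentially identical to the paper's.
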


\begin{proof}
Note that the condition (\ref{it:non-crossing}) implies
that $p$ can not cross $z_j$ for any $j \in \bZ / n \bZ$.
Assume for contradiction that the assertion of Lemma \ref{lm:no_path} is false.
Then for each $i \in \bZ / n \bZ$,
there is a path $p_i$ satisfying the conditions above
as shown in Figure \ref{fg:no_path}.
Let $q_i$ be the part of $z_i$ which starts at the black node $z_i \cap p_i$
and goes backward (with respect to the orientation of $z_i$) to the white node $z_i \cap p_{i+1}$.
We can consider the path $y=\bigcup_i (p_i \cup q_i)$ which starts at $p_1 \cap z_0$,
goes along $p_1$ to $p_1 \cap z_1$,
then goes along $q_1$ to $p_{2} \cap z_1$,
then goes along $p_2$ to $p_2 \cap z_2$, and so on.
Then every other edge of $y$ belongs to $D_2$,
so that we can construct a perfect matching $D_3$ with $D_2 \triangle D_3 = y$.
Then every edge of $D_3$ connected to $D_1 \triangle D_2$ from the left
is connected to a black node.
This contradicts Corollary \ref{cr:intersection},
and Lemma \ref{lm:no_path} is proved.
\end{proof}

We say that a path $z$ is {\em zigzag at white nodes}
if there is no edge of $G$
connected to a white node on $z$ from the right.

\begin{lemma}\label{lm:zigzag_white}
Let $z_i$ be a connected component of $D_1 \triangle D_2$
with the property in Lemma \ref{lm:no_path}.
Then there are perfect matchings $\Dbar_1$ and $\Dbar_2$
with the same height changes as $D_1$ and $D_2$ respectively
such that
$$
 \Dbar_1 \triangle \Dbar_2 = \zbar_i \cup \bigcup_{j \ne i} z_j,
$$
where $\zbar_i$ is zigzag at white nodes.
\end{lemma}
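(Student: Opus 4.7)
The plan is to prove Lemma \ref{lm:zigzag_white} by induction on the number of white nodes on $z_i$ that carry a rightward edge of $G$, with a single local flip of $D_2$ along an alternating cycle supplying the inductive step. Throughout I keep $\bar D_1 := D_1$ and only modify $D_2$.

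For the inductive step, pick a white node $w$ on $z_i$ with a rightward edge $e = w b \in G$, chosen to be the edge closest to $(D_1)_w$ in the clockwise order at $w$. Since the only $(D_1 \cup D_2)$-edges at $w$ are $(D_1)_w, (D_2)_w \in z_i$, we have $e \notin D_1 \cup D_2$. Starting from $e$, I would build an alternating walk with respect to $D_2$: take $e$, then the unique $D_2$-edge at $b$, then a non-$D_2$ edge at the resulting white node chosen so the walk stays in the strip immediately to the right of $z_i$ and to the left of $z_{i-1}$, and so on. Every $D_2$-edge appearing along the walk lies in $D_1 \cap D_2$: none can belong to $z_i$ or $z_{i-1}$ without forcing the walk across those cycles, which the construction precludes.

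The crucial claim is that this walk closes up into an alternating cycle $C$ without ever reaching $z_{i-1}$. If instead the walk first escaped by arriving at a white node $u \in z_{i-1}$, then the portion of the walk traversed from $u$ back to $b$ would be a path whose every other edge lies in $D_1 \cap D_2$, starting at the white node $u$ entered from the right of $z_{i-1}$ and ending at the black node $b$ entered from the left of $z_i$ (the latter because $e$ was rightward at $w$). This is precisely the forbidden path $p$ of Lemma \ref{lm:no_path} for our chosen component $z_i$, a contradiction.

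Setting $\bar D_2 := D_2 \triangle C$ gives a perfect matching. Because $C$ is confined to a ribbon-shaped strip, it is null-homologous in $T$, so $h(\bar D_2) = h(D_2)$; and because $C$ is disjoint from every $z_j$ with $j \ne i$, one has $\bar D_1 \triangle \bar D_2 = (z_i \triangle C) \cup \bigcup_{j \ne i} z_j$, where the new component at $w$ now uses $e$. Our choice of $e$ as the edge closest to $(D_1)_w$ ensures that absorbing $e$ does not create any new rightward edge at $w$, so the number of offending white nodes strictly decreases and the induction terminates. The main obstacle is the confinement claim above: one must match the alternating walk's structure exactly to the hypothesis of Lemma \ref{lm:no_path} — correct parity of alternation, correct left/right side conditions at the endpoints — and one must also verify that the walk cannot wind nontrivially around $T$, ensuring $C$ is null-homologous so that height changes are preserved.
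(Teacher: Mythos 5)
Your high-level idea — remove a bad rightward edge at a white node by flipping one of the matchings along an alternating cycle contained in the strip, then repeat — is indeed the paper's strategy. But several of the steps you would need to fill in do not hold as stated, and a few of them are precisely the points the paper's proof is careful about.

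First, your alternating walk is underspecified. You say to choose a non-$D_2$ edge at each white node ``so the walk stays in the strip,'' but there is no reason such a choice exists, is unique, or leads back to $w$; an alternating $D_2$-walk started at $e$ has no built-in reason to close up into a cycle at all. The paper avoids this by invoking non-degeneracy: it takes a perfect matching $D$ with $e\in D$ and defines $q$ as the component of $D\triangle(D_1\cap D_2)$ through $e$. This is a canonical arc (no choices), and a parity count forces it to start at the white node $w$ and end at a black node on some $z_j$; Lemma~\ref{lm:no_path} then pins the other endpoint $b$ to $z_0$ itself. Second, your null-homology claim is not justified: a cycle confined to the annular strip between two $z_j$'s can still have nonzero homology. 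In the paper, null-homology is automatic because the cycle is $q\cup q'$ where $q'\subset z_0$ is one of the two arcs of $z_0$ between $w$ and $b$, so the cycle bounds a disk inside the strip. Third, keeping $\bar D_1=D_1$ throughout is not possible in general. Depending on which of the two arcs $q'$ of $z_0$ makes $q\cup q'$ null-homologous, the cycle $q\cup q'$ is $D_2$-alternating (and one flips $D_2$) or $D_1$-alternating (and one flips $D_1$); the paper treats both cases. Fourth, your termination measure is incorrect: replacing $z_0$ by $z_0'=(z_0\setminus q')\cup q$ moves the cycle into the strip, and the new white nodes along $q$ may carry rightward edges, so the number of ``offending'' white nodes need not decrease. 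The paper instead uses the finiteness of the set of edges between $z_0$ and $z_1$, noting that $z_0'$ is strictly closer to $z_1$. Finally, to iterate at all you must know that $z_0'$ still satisfies the property of Lemma~\ref{lm:no_path}; the paper proves this by concatenating any offending path $p$ for $z_0'$ with a piece of $q$ to get a forbidden path for $z_0$, and your sketch omits this verification entirely.
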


\begin{proof}
We may assume $i=0$ without loss of generality.
Assume that an edge $e$ is connected
to a white node $w$ on $z_0$ from the right,
and take a perfect matching $D$ containing $e$.
Note that $D_1$ and $D_2$ coincide on the strip between $z_0$ and $z_1$.
The connected component $q$ of $D \triangle (D_1\cap D_2)$ containing $e$
forms an arc starting from the white node $w$
and ends at either $z_0$ or $z_1$.
The node at the intersection of $q$ with $z_0$ or $z_1$
other than $w$ must be a black,
and we will call it $b$.
Then the property in Lemma \ref{lm:no_path} implies that
$b$ must be on $z_0$.
For one of the two connected components of $z_0 \setminus \{ b, w \}$,
which we will call $q'$,
the union $q \cup q'$ forms a homologically trivial cycle.

Let us first consider the case
when $q'$ goes from $w$ to $b$
along the direction of $z_0$
as shown in Figure \ref{fg:zigzag_white1}.
Then we can take a perfect matching $D_2'$ with $D_2 \triangle D_2' = q \cup q'$. 
The resulting perfect matching $D_2'$ has the same height change as $D_2$, and
the connected components of $D_1 \triangle D_2'$ are $z_i$ ($i \ne 0$) and
$z_0':=(z_0 \setminus q') \cup q$.

\begin{figure}
\begin{minipage}[t]{.5 \linewidth}
\centering
\input{zigzag_white1.pst}
\caption{The edge $e$ and the paths $q$ and $q'$}
\label{fg:zigzag_white1}
\end{minipage}
\begin{minipage}[t]{.5 \linewidth}
\centering
\input{zigzag_white2.pst}
\caption{The paths $p$, $r$, and $z_0'$}
\label{fg:zigzag_white2}
\end{minipage}
\end{figure}

We claim that $z_0'$ also has the property in Lemma \ref{lm:no_path}.
Assume for contradiction
that there is a path $p$ satisfying the conditions in Lemma \ref{lm:no_path} for $z_0'$
as shown in Figure \ref{fg:zigzag_white2}.
Let $r$ be the part of $q$
starting from the white node $w$ and goes along $z_0'$
until it meets the white node at $p \cap z_0'$.
Let further $p'$ be the path obtained by concatenating $r$ and $p$.
Then $p'$ satisfies the conditions in Lemma \ref{lm:no_path} for $z_0$,
which is a contradiction.
Hence $z'_0$ has the property in Lemma \ref{lm:no_path}.

In the case where $q'$ goes from $b$ to $w$,
we can replace $D_1$ by $D_1'$
with $D_1 \triangle D_1' = q \cup q'$.
By the same argument as above,
one can show that
$D_1' \triangle D_2=z'_0 \cup \bigcup_{i \ne 0}z_i$
and $z_0'$ has the property in Lemma \ref{lm:no_path}.

Note that $z_0'$ is `closer' to $z_1$ than $z_0$.
If $z_0'$ is not zigzag at a white node,
then we can repeat the same operation.
Since there are only finitely many edges between $z_0$ and $z_1$,
this process terminates in finitely many steps, and
one obtains desired perfect matchings $\Dbar_1$ and $\Dbar_2$.
\end{proof}

So far,
we have shown the existence of a connected component $z_i$
in $D_1 \triangle D_2$
which is zigzag at white nodes.
In Lemma \ref{lm:remove_e} below,
we show that
if $z_i$ is not zigzag at a black node
by some edge $e$ of $G$,
then we can remove the edge $e$
without changing the characteristic polygon.

\begin{lemma} \label{lm:remove_e}
Assume that $z_i$ is zigzag at white nodes.
If $e$ is an edge connected to a black node $b$ on $z_i$ from the left of $z_i$,
then one can remove $e$ without changing the characteristic polygon.
\end{lemma}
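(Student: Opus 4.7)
Since every perfect matching of $G \setminus \{e\}$ is also a perfect matching of $G$, we have $\Delta(G \setminus \{e\}) \subseteq \Delta(G)$ automatically, so the plan is to show that each corner of $\Delta(G)$ is realized by a perfect matching of $G$ that does not contain $e$. For the corners $h(D_1)$ and $h(D_2)$ this is immediate: in $D_j$ the black node $b$ is matched by the $z_i$-edge $a_j \in D_j$, which is distinct from $e$ by hypothesis, so $e \notin D_1$ and $e \notin D_2$. Hence $D_1$ and $D_2$ themselves already witness these two corners.

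For any other corner $\frakc$, let $D$ be a perfect matching of $G$ with $h(D) = \frakc$. If $e \notin D$ we are done; otherwise I would construct $D'$ with $e \notin D'$ and $h(D') = \frakc$ by swapping $D$ along a null-homologous alternating cycle $C$ through $e$. The natural candidate is the connected component $C$ of $D \triangle D_1$ containing $b$: since in $D_1$ the vertex $b$ is matched by $a_1$ and in $D$ it is matched by $e$, the cycle $C$ must contain both $e$ and $a_1$. Then $D' := D \triangle C$ is a valid perfect matching of $G$ with $e \notin D'$, and its height change is $h(D) + [C]$.

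The main obstacle is to establish $[C] = 0 \in H_1(T, \bZ)$, which would give $h(D') = \frakc$ as required. I expect this to follow from the structural properties imposed on $z_i$: the zigzag-at-white-nodes condition ensures that $C$ cannot leave $z_i$ through the right side of a white node, and the no-path property of Lemma \ref{lm:no_path} together with Corollary \ref{cr:intersection} should tightly constrain how $C$ can interact with the neighboring components $z_{i\pm 1}$. A careful case analysis should show that a non-trivial $[C]$ would, by a construction reminiscent of the one appearing in the proof of Lemma \ref{lm:no_path} (concatenating subpaths along the $z_j$ with bridges through $D_1 \cap D$), yield an alternating path of the kind forbidden by the no-path property, or else contradict the corner-ness of $D$ via Lemma \ref{lm:intersection}. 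In cases where the cycle in $D \triangle D_1$ happens not to be null-homologous, the analogous cycle in $D \triangle D_2$ through $b$, which contains $e$ and $a_2$, provides a second candidate; the technical heart of the proof is to verify that one of these swaps always succeeds, which is where the asymmetry ``from the left'' and the specific choice of $z_i$ satisfying the no-path property are essential.
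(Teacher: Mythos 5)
Your overall reduction is correct and matches the paper's opening move: it suffices, for every perfect matching $D$ with $e \in D$, to produce another perfect matching $D'$ with $h(D') = h(D)$ and $e \notin D'$, and the observation that $D_1, D_2$ already avoid $e$ is right. However, the technical heart of your proposal — proving that the connected component $C$ of $D \triangle D_1$ (or of $D \triangle D_2$) through $e$ is null-homologous — is not actually true, and the paper does not attempt to prove it. What the paper shows is weaker and subtler: taking $y$ to be the component of $D_2 \triangle D$ through $e$, one picks the white node $w$ on $y \cap z_i$ nearest to $b$ from which $y$ arrives (this exists because $\langle z_i, y\rangle > 0$ by Lemmas \ref{lm:components} and \ref{lm:intersection}), and then splits on the direction of $z_i$ between $b$ and $w$. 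In one direction, the zigzag-at-white condition forces $y$ to be trapped in a disc and hence homologically trivial, giving a contradiction with $[y] \neq 0$. In the other direction no such contradiction arises; instead one uses the zigzag-at-white structure to build an auxiliary path $c$ from $b$ to $w$ out of pieces of $z_i$ and $D_2 \triangle D$ so that $y \cup c$ is a homologically trivial cycle alternating with respect to $D$. The final swap is $D' = D \triangle (y \cup c)$, not $D \triangle y$ (nor $D \triangle C$). So the obstacle you identify is real, but your proposed route around it — ``show $[C]=0$, possibly falling back on the other reference matching'' — does not close the gap; the missing idea is the construction of the auxiliary arc $c$ that replaces part of $y$, and this is where the hypothesis that $z_i$ is zigzag at white nodes is actually used.

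A secondary point: the case reduction you can and should make, and which the paper makes, is that it is enough to treat $D$ whose height change is not on the segment between $h(D_1)$ and $h(D_2)$; it is not enough to handle only the corners of the polygon, because a non-corner lattice point in $\Delta(G)$ could in principle be represented only by matchings containing $e$, and then removing $e$ would delete a term of the characteristic polynomial even if the Newton polygon were unchanged. (The paper is making a stronger claim about the polygon only, so it only needs corners plus the segment argument, but your write-up elides this.)
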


\begin{proof}
It suffices to show that
for any perfect matching $D$ containing $e$,
there is another perfect matching $D'$
with the same height change as $D$
not containing $e$.
One may assume that the height change $h(D)$ of $D$ is
not on the line segment between $h(D_1)$ and $h(D_2)$.

Let $y$ be the connected component of $D_2 \triangle D$ containing $e$.
If $y$ is homologically trivial,
then take the perfect matching $D'$ such that $D' \triangle D=y$.
The matching $D'$ has the same height change as $D$
and does not contain $e$.
Hence we may assume that $y$ is homologically non-trivial.

Choose a lift $\btilde$ of the node $b$
to the universal cover $\bR^2 \to T$
and let $\ztilde_i$ and $\ytilde$ be the lifts of $z_i$ and $y$
containing $\btilde$ respectively.
Lemmas \ref{lm:components} and \ref{lm:intersection} imply
$
 \la z_i, y \ra > 0,
$
so that $\ytilde$ first comes from the right of $\ztilde_i$,
intersects $\ztilde_i$ several times,
and goes away to the left of $\ztilde_i$.
Hence there must be a white node $\wtilde \in \ztilde_i \cap \ytilde$
such that the direction of $\ytilde$ is from $\wtilde$ to $\btilde$.
We assume $\wtilde$ is the nearest to $\btilde$
in the part of $\ytilde$ before $\btilde$.
Let $b$ and $w$ be the images on the torus $T$
of $\btilde$ and $\wtilde$ respectively.

\begin{figure}[ht]
\centering
\input{remove_e1.pst}
\caption{The case when $\ztilde_i$ goes from $\wtilde$ to $\btilde$}
\label{fg:remove_e1}
\end{figure}

First we discuss the case
when $\ztilde_i$ goes from $\wtilde$ to $\btilde$.
Figure \ref{fg:remove_e1} shows the paths $y$ and $z_i$
on the torus $T$.
When we travel from $b$ along $y \subset D_2 \triangle D$,
the next edge $e_1$ is on $z_i \subset D_1 \triangle D_2$,
and the direction of $y$ is opposite to that of $z_i$ on that edge.
Then the next node $w_1$ is a white node on $y \cap z_i$.
Since $z_i$ is zigzag at white nodes,
the path $y$ cannot escape to the right of $z_i$,
and the next edge $e_2$ in $y$ either goes to the left of $z_i$
or on the path $z_i$.

If $y$ goes to the left of $z_i$,
then $y$ must eventually intersect $z_i$ again
since $y$ is an embedded circle in $T$.
If $e_2$ is on the path $z_i$,
then $e_2 \in D_1 \cap D$
and the next edge $e_3$ on the path $y$
is in $D_2$.
By continuing in this way,
one sees that $y$ must be contained
in the simply connected open subset $U$ of $T$
bounded by the parts of $y$ and $z_i$ between $b$ and $w$.
This implies that $y$ is homologically trivial,
which contradicts our assumption.

\begin{figure}[ht]
\centering
\input{remove_e2.pst}
\caption{The case when $\ztilde_i$ goes from $\wtilde$ to $\btilde$}
\label{fg:remove_e2}
\end{figure}

Hence the path $\ztilde_i$ goes from $\btilde$ to $\wtilde$
as shown in Figure \ref{fg:remove_e2}.
Let $e_1$ be the edge in $D_1$
incident to the node $b$.
By the definition of $z_i$,
the other node $w_1$ of $e_1$ is on the path $z_i$.
Take the edge $e_2$ in $D$
incident to $w_1$.
Since $z_i$ is zigzag on white nodes,
$e_2$ is either on $z_i$ or goes to the left of $z_i$.
If $e_2$ is on $z_i$,
then let $e_3$ be the edge in $D_1$
incident to the other node $b_2$ of $e_2$.
If $e_2$ goes to the left of $z_i$,
then continue $e_2$
along the connected component $y_1$ of $D_2 \triangle D$
containing $e_2$.
Then $y_1$ must eventually intersect $z_i$
at a black node,
which we will call $b_2$.
Let $e_3$ be the edge of $D_1$
incident to $b_2$.

By continuing in this way,
one can find a path $c$ from $b$ to $w$
which consists of parts of $z_i$ or $D_2 \triangle D$.
By concatenating $y$ with $c$,
one obtains a homologically trivial path on $G$
such that every other edge belongs to $D$.
Then the perfect matching $D'$
such that $D \triangle D' = y \cup c$
has the same height change as $D$
and does not contain $e$.
This concludes the proof of Lemma \ref{lm:remove_e}.
\end{proof}

\begin{lemma} \label{lm:zigzag}
Let $\frakc_1$ and $\frakc_2$ be adjacent corners of the characteristic polygon of $G$.
We can remove some edges
from $G$
to obtain a dimer model $G'$ such that
\begin{itemize}
 \item
the characteristic polygon of $G'$ coincides with that of $G$, and
 \item
there are perfect matchings $D_1$ and $D_2$ of $G'$
such that $h(D_1) = \frakc_1$, $h(D_2) = \frakc_2$ and
$D_1 \triangle D_2$ consists of zigzag paths.
\end{itemize}
\end{lemma}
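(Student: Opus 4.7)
The plan is to transform the connected components of $D_1 \triangle D_2$ into zigzag paths one by one in cyclic order, by combining Lemmas \ref{lm:no_path}, \ref{lm:zigzag_white} and \ref{lm:remove_e}.

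First I would invoke Proposition \ref{pr:non-degenerate} to reduce to the case where $G$ is non-degenerate. Then I would pick corner perfect matchings $D_1, D_2$ with $h(D_1) = \frakc_1, h(D_2) = \frakc_2$, and use the trick recalled just before Lemma \ref{lm:no_path} (replacing $D_1$ by another matching with the same height change whenever a homologically trivial component appears) to arrange that every connected component of $D_1 \triangle D_2$ is homologically non-trivial. Label the resulting cycles $\{z_j\}_{j \in \bZ/n\bZ}$ cyclically so that $z_{j+1}$ sits immediately to the right of $z_j$.

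Now Lemma \ref{lm:no_path} supplies some $z_i$ with the no-path property. Applying Lemma \ref{lm:zigzag_white} modifies $D_1, D_2$ (preserving their height changes and leaving every $z_j$ with $j \ne i$ untouched) so that $z_i$ is replaced by $\zbar_i$ which is zigzag at white nodes. At any black node $b$ on $\zbar_i$, the two edges of $\zbar_i$ already saturate $\Dbar_1$ and $\Dbar_2$ at $b$, so every edge incident from the left of $\zbar_i$ to $b$ lies outside $\Dbar_1 \cup \Dbar_2$; hence iterated use of Lemma \ref{lm:remove_e} deletes all such edges while $\Dbar_1, \Dbar_2$ persist as perfect matchings of the resulting dimer model. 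After finitely many deletions $\zbar_i$ is zigzag at both white and black nodes, that is, a genuine zigzag path.

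The key observation, which I expect to be the main subtle point, is that $z_{i+1}$ then automatically inherits the no-path property of Lemma \ref{lm:no_path} (with indices shifted so that $\zbar_i$ plays the role of $z_{(i+1)-1}$). Any forbidden path would have to start at a white node on $\zbar_i$ with its first edge on the right of $\zbar_i$, but zigzagness of $\zbar_i$ at white nodes precisely rules out such edges. So the same triple of operations can be carried out on $z_{i+1}$, then on $z_{i+2}$, and so on cyclically. At each step one must check that previously produced zigzag paths survive: Lemma \ref{lm:zigzag_white} applied to $z_{j+1}$ only alters matchings in the strip to the right of $z_{j+1}$, leaving $\zbar_j$ alone; and the edges removed by Lemma \ref{lm:remove_e} for $z_{j+1}$ are incident to black nodes of $z_{j+1}$ in the strip between $\zbar_j$ and $z_{j+1}$, which cannot touch $\zbar_j$ since $\zbar_j$ has no edges on its right at white nodes. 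After $n$ iterations every component of the final $D_1 \triangle D_2$ is a zigzag path, yielding the desired $G'$.
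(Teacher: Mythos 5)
Your overall strategy matches the paper's: process the cyclically ordered components $z_j$ of $D_1 \triangle D_2$ one at a time via Lemmas \ref{lm:no_path}, \ref{lm:zigzag_white} and \ref{lm:remove_e}, propagating the no-path condition to a neighbouring component and iterating. Your care that earlier zigzag paths are not disturbed by later modifications is also sound.

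The gap is the direction of iteration. To run the argument of Lemma \ref{lm:zigzag_white} on a component $z_j$ one needs the no-path condition in the strip on the \emph{right} of $z_j$, i.e.\ between $z_j$ and $z_{j+1}$: the proof of that lemma picks an offending edge on the right of $z_j$, follows the arc of $D \triangle (D_1 \cap D_2)$ through it, and uses the no-path condition to rule out the arc reaching $z_{j+1}$. After $z_i$ has become a genuine zigzag path, zigzagness at white nodes gives the no-path condition for the strip between $z_i$ and $z_{i+1}$ --- that is precisely what you observed, but it is also exactly the condition that was already used to process $z_i$ --- while zigzagness at black nodes gives the \emph{new} condition, for the strip between $z_{i-1}$ and $z_i$. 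The latter is what enables the next application of Lemma \ref{lm:zigzag_white}, namely to $z_{i-1}$; neither gives the condition for the strip between $z_{i+1}$ and $z_{i+2}$ that your plan would need in order to process $z_{i+1}$. So the paper moves left, $z_0, z_{-1}, z_{-2}, \dots$, justifying each step by observing that a forbidden path would have to end at a black node of the already-zigzag component from its left, which zigzagness at black nodes forbids. You may have been misled by the indexing in the statement of Lemma \ref{lm:no_path} (which names $z_{i-1}$ and $z_i$), but the way the property is actually invoked in the proofs of Lemmas \ref{lm:zigzag_white} and \ref{lm:zigzag} shows that the relevant strip is always the one to the right of the component currently being made zigzag, forcing the iteration to proceed to the left.
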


\begin{proof}
First choose arbitrary perfect matchings
with height changes $\frakc_1$ and $\frakc_2$ respectively.
Take a path $z_i$ satisfying the property in Lemma \ref{lm:no_path}.
We may assume $i=0$ without loss of generality.
By Lemma \ref{lm:zigzag_white},
we can assume that $z_0$ is zigzag at white nodes
by replacing $D_1$ and $D_2$ if necessary.
If $z_0$ is not zigzag at some black node,
then one can use Lemma \ref{lm:remove_e}
to remove the edge $e$
which makes $z_0$ not zigzag at that node.
After iterating this operation finitely many times,
we can turn $z_0$ into a zigzag path.

Now the property in Lemma \ref{lm:no_path} holds for $z_{-1}$,
since a path $p$ satisfying the conditions should be connected
to a black node on $z_0$ from the left of $z_0$,
which is impossible since $z_0$ is a zigzag path.
Then we can repeat the same process
to turn $z_{-1}$ into a zigzag path.

By successively performing this operation,
we can turn all $z_i$ into zigzag paths.
\end{proof}

Now we can prove Theorem \ref{th:main}:

\begin{proof}[Proof of Theorem \ref{th:main}]
We can use Lemma \ref{lm:zigzag} repeatedly
to obtain another dimer model $G''$
such that
\begin{itemize}
 \item
the characteristic polygon of $G''$ coincides with that of $G$, and 
 \item
for any pair $(\frakc_1, \frakc_2)$
of adjacent corners of the characteristic polygon,
there are perfect matchings $D_1$ and $D_2$ of $G''$
such that $h(D_1) = \frakc_1$, $h(D_2) = \frakc_2$ and
$D_1 \triangle D_2$ consists of zigzag paths.
\end{itemize}
Zigzag paths constituting $D_1 \triangle D_2$
for pairs $(D_1, D_2)$ of adjacent corner perfect matchings
ensure that the zigzag polygon is at least as large as the characteristic polygon.
Then Corollary \ref{cr:zigzag<char} shows
that the zigzag polygon and the characteristic polygon of $G''$ coincide.
Now we can apply Theorem \ref{th:same_zigzag} to $G''$
to obtain a cancellative dimer model $G'$,
whose zigzag polygon is the same as that of $G''$.
Since $G'$ is cancellative,
the characteristic polygon of $G'$ coincides with its zigzag polygon,
which is the same as the characteristic polygon of $G$.
\end{proof}

Corollary \ref{cr:multiplicity-free} is an immediate consequence
of the proof of Theorem \ref{th:main}:

\begin{proof}[Proof of Corollary \ref{cr:multiplicity-free}]
The proof of Lemma \ref{lm:zigzag_white} shows that
if the connected component $z_i$ of $D_1 \triangle D_2$ with the property
in Lemma \ref{lm:no_path} is not zigzag at a white node,
then at least one of $D_1$ or $D_2$ has multiplicity.
In other words,
$z_i$ is zigzag at white nodes,
if both $D_1$ and $D_2$ are multiplicity-free.

On the other hand,
the proof of Lemma \ref{lm:remove_e} shows
that if a component $z$ of $D_1 \triangle D_2$ is
zigzag at white nodes but not zigzag
at a black node
by an edge $e$,
then any perfect matching containing $e$
has a multiplicity.
This cannot be the case
if the dimer model is strongly non-degenerate
and all the corner perfect matchings are multiplicity-free.

It follows by the argument in Lemma \ref{lm:zigzag} that
for a strongly non-degenerate dimer model,
the symmetric difference $D_1 \triangle D_2$
of a pair $(D_1, D_2)$ of perfect matchings,
whose height changes are adjacent corners
of the characteristic polygon,
consists of zigzag paths.
This implies that the zigzag polygon
is at least as large as (and hence coincides with)
the characteristic polygon,
and Corollary \ref{cr:multiplicity-free} is proved.
\end{proof}

\bibliographystyle{amsalpha}
\bibliography{bibs}

\noindent
Charlie Beil

Simons Center for Geometry and Physics,
State University of New York,
Stony Brook,
NY 11794-3636,
USA

{\em e-mail address}\ : \ cbeil@scgp.stonybrook.edu

\ \vspace{0mm} \\

\noindent
Akira Ishii

Department of Mathematics,
Graduate School of Science,
Hiroshima University,
1-3-1 Kagamiyama,
Higashi-Hiroshima,
739-8526,
Japan

{\em e-mail address}\ : \ akira@math.sci.hiroshima-u.ac.jp

\ \vspace{0mm} \\

\noindent
Kazushi Ueda

Department of Mathematics,
Graduate School of Science,
Osaka University,
Machikaneyama 1-1,
Toyonaka,
Osaka,
560-0043,
Japan.

{\em e-mail address}\ : \ kazushi@math.sci.osaka-u.ac.jp
\ \vspace{0mm} \\

\end{document}